
\documentclass[10pt]{article}
\usepackage{color,amsfonts, amsmath, amssymb,latexsym,amsthm}
\usepackage[margin=1.1in]{geometry}

\def\trans{{\;^\tau\!}}

\def\ui{{\underline i}}
\def\uj{{\underline j}}
\def\ua{{\underline a}}

\def\dim{\operatorname{dim}}

\newcommand{\m}{\mathfrak m}

\def\rank{\operatorname{rank}}

\def\ann{\operatorname{Ann}}

\newtheorem{theorem}{Theorem}[section]
\newtheorem{lemma}[theorem]{Lemma}
\newtheorem{corollary}[theorem]{Corollary}
\newtheorem{proposition}[theorem]{Proposition}
\newtheorem{remark0}[theorem]{Remark}
\newtheorem{example0}[theorem]{Example}
\newtheorem{definition}[theorem]{Definition}

\newenvironment{example}{\begin{example0}\rm}{\end{example0}}
\newenvironment{remark}{\begin{remark0}\rm}{\end{remark0}}

\newcommand{\propref}[1]{Proposition~\ref{#1}}
\newcommand{\thmref}[1]{Theorem~\ref{#1}}

\newcommand{\corref}[1]{Corollary~\ref{#1}}
\newcommand{\exref}[1]{Example~\ref{#1}}

\title{  \bf   Analytic Isomorphisms  of compressed local algebras
\footnote{ 2010 {\it Mathematics Subject Classification}.  Primary
13H10; Secondary 13H15; 14C05
\newline
\indent \ \ {\it Key words and Phrases:} Artin Gorenstein local rings, Inverse system,
Hilbert functions, Isomorphism classes.}}
\author{\large   J. Elias
\thanks{Partially supported by  MTM2010-20279-C02-01}
\and \large M. E. Rossi
}

\date{\today}

\begin{document}
\maketitle
\begin{abstract}
In this paper we consider Artin local $K$-algebras with maximal length  in the class of Artin algebras  with given embedding dimension and socle type.  They have been widely studied by several authors, among others by   Iarrobino, Fr\"{o}berg and Laksov.    If the  local $K$-algebra is Gorenstein of socle degree     $3, $   then the authors proved that it  is   canonically   graded,  i.e.  analytically isomorphic to its  associated graded ring, see   \cite{ER12}.  This   unexpected result has been  extended    to compressed level   $K$-algebras of socle degree $3$   in \cite{DeS12}.   In this paper we end the investigation proving that  extremal  Artin  Gorenstein  local $K$-algebras  of socle degree $s \le 4$ are canonically graded, but the result does not  extend to extremal  Artin  Gorenstein  local rings  of socle degree $5  $  or to compressed level local rings of socle degree $4$ and type $>1.$
As a consequence we present   results  on   Artin   compressed  local  $K$-algebras  having a specified    socle type.
\end{abstract}

\section{Compressed algebras and Inverse System}

Let    $A=R/I $ be an Artin  local ring where $R=K[[x_1,\dots, x_n]] $ is the formal power series ring and $K$ is  an algebraically closed field of characteristic zero. We denote by $\m = {\mathcal M}/I $ the maximal ideal of $A$ where ${\mathcal M}=(x_1,\dots, x_n) $ and let $Soc(A) = 0 : \m $  be the  socle of $A.$ Throughout this paper we denote by $s$ the {\it{socle degree}}  of $A, $ that is the maximum integer $j$ such that $\m^j \neq 0.$ The {\it{type}} of $A$ is $t := \dim_K Soc(A). $

Let $A$ be of initial degree $v, $ that is $I \subseteq {\mathcal M}^v \setminus {\mathcal M}^{v+1}$  and socle degree $s.$  The {\it{socle type }} $E=E(A)=(0, \dots, e_{v-1}, e_v, \dots, e_s, 0, 0, \dots) $ of $A$ is the sequence $E$ of natural numbers where
$$ e_i := \dim_K ((0: \m) \cap \m^i / (0:\m) \cap \m^{i+1}).$$
Clearly $e_s > 0  $ and $e_j=0 $ for $j>s, $ but other conditions  are necessary on $E$ for being {\it{permissible}} for an Artin $K$-algebra of given initial degree  $v $ and socle degree $ s $ (see  \cite{Iar84}). Essentially the conditions require room for generators of $I$ of valuation  $v$ and assure that there are not generators of valuation $v-1.$

A  {\it{level algebra}} $A$  of socle degree $s $  (say also {\it{$s$-level}}), type $t $ and embedding dimension $n$ is an Artin quotient of $R$ whose socle is concentrated in a single degree (i.e. $Soc(A)= \m^s$) and satisfying   $ \dim_K Soc(A)=t.  $ Hence   $e_j=0$ for $j \neq s$ and $e_s=t.$ The Artin algebra is Gorenstein if  $t=1.$

It make sense to consider maximum length Artin algebras of given socle type $E  $ and they can be characterized  in terms of their Hilbert function.
The Hilbert vector of $A$  is denoted by $HF(A)=\{h_0, h_1, \dots, h_s\} $ where $h_i=h_i(A) = \dim_K \m^i/\m^{i+1} $ is the Hilbert function of $A. $ By  its definition, the Hilbert function of $A$ coincides with the Hilbert function of the corresponding associated graded ring $gr_{\m}(A)= \oplus_{i\ge 0} \m^i/\m^{i+1}. $
We say that the Hilbert function $HF(A)$ is maximal in the class of Artin  level algebras of given embedding dimension and socle  type, if  for each integer $i$, $h_i(A) \ge h_i(A') $ for any other Artin  algebra $A'$ in the same class.
The existence of a maximal $HF(A)$ was shown for graded algebras by A. Iarrobino \cite{Iar84}. In the general case by Fr\"{o}berg and Laksov \cite{FL84}, by Emsalem \cite{Ems78},  by A. Iarrobino and the first author in \cite{EI87} in the local case.

\begin{definition} An Artin  algebra $A=R/I $ of socle type $E$ is compressed if and only if it has maximal length $e(A)= \dim_K A$ among Artin  quotients of $R$ having socle type $E$ and embedding dimension $n. $
\end{definition}

  The maximality of the Hilbert function characterizes compressed algebras  as follows.
If $A$ is an Artin algebra (graded or local) of socle type $E, $ it is known  that for $i\ge 0,$
$$h_i(A) \le \min \{\dim_K  R_i, e_i \dim_K R_0 + e_{i+1} \dim_K  R_1 + \dots + e_s \dim_K  R_{s-i} \}.  $$
  Accordingly with \cite{Iar84}, Definition 2.4. B, we can rephrase  the previous definition in  terms of the Hilbert function.

\begin{definition}  \label{compressed} A local (or graded) $K$-algebra $A$ of socle degree $s, $   socle type $E  $    and initial degree $v$ is   {\it{compressed}} if

$$
h_i(A) =
\left\{
\begin{array}{ll}
\sum_{u = i}^s   e_u ( \dim_K R_{u-i})   & \text{ if }   i \ge v \\ \\
\dim_K R_i  & \text{ otherwise}
\end{array}
\right.
$$
In particular  a $s$-level algebra $A$  of   type $t $ and embedding dimension $n$ is   compressed if
$$ h_i(A) = min \{ \binom{n+i-1}{i}, \  t \binom{n+s-i-1}{s-i}\}.$$

\noindent
 If $t=1 $ and the above equality holds then $A$ is called an  {\it{extremal Gorenstein algebra}} or also {\it{compressed Gorenstein algebra}}.
 \end{definition}

 \vskip 2mm

It is clear that compressed algebras impose several restrictive numerical conditions on the socle sequence $E $ (see  \cite{Iar84}, Definition 2.2). For instance if $v$ is the initial degree of $A$, then
\begin{equation} \label{ev-1}  e_{v-1} = max \{0, \dim_K R_{v-1} - \sum_{u \ge v}(e_u \dim_K R_{u-(v-1)})\}. \end{equation}
If $s \ge 2(v-1), $ then it is easy to see that $e_{v-1}=0$ because
$\dim_K R_{s-(v-1)} \ge \dim_K R_{v-1}$.
 This is the case if $A$ is Gorenstein.

Y.H. Cho and I. Iarrobino proved that the family of graded compressed level  quotients of  $P=K[x_1, \dots, x_n]$ is  parameterized by an open dense subset of the Grassmanian of codimension $c $ quotients of $P_s$ and has dimension $c (\dim P_s - c)$ (see \cite{IarCho}).
 Boij studied the minimal free resolutions for compressed graded level algebras \cite{Boi99}, Stanley and Hibi have studied connections of these algebras with combinatorics.

\vskip 2mm
Accordingly with  Emsalem \cite[Pag. 408]{Ems78} we give the following definition.

\begin{definition}
A local algebra $(A, \m)$ is canonically graded if there exists a $K$-algebra isomorphism between $A$ and its associated graded ring $gr_{\m}(A). $
\end{definition}

\noindent
Notice that, as a consequence,  $A$ is   canonically graded if and only if  $A$ is analytically  isomorphic to a  standard graded $K$-algebra.  Remark that   a  local $K$-algebra
can  be considered graded in a non-standard way, but not canonically graded.
For instance, consider $A=K[[t^2,t^3]];  $ in this case $A $ is not analytically isomorphic to $gr_\m(A)\cong K[x,y]/(y^2)$  because $A$ is reduced and  $gr_\m(A) $ is not reduced, but
we remark that  $A\cong K[x,y]/(y^2-x^3)$
is graded   setting  $deg(x)=2$ and $deg(y)=3$.

 \bigskip
Actually it  is very rare that a local ring is isomorphic to its associated graded ring. If this is the case, several can be  the applications, see for example \cite{CENR}, \cite{ER12}, \cite{EV}.

 \bigskip

 If $A$ is an Artin   local (or graded) ring, the  dual module  $V= Hom(A, K) $ can be identified with a  maximal length submodule  of the polynomial ring  $P=K[y_1,\dots,y_n]  $ which is  closed under partial  derivation.
We recall the main facts and establish notation concerning  the inverse system of Macaulay
 in the  study   of  Artin   local rings $(A, \m).$
The reader should refer to \cite{Ems78} and \cite{Iar94} for an extended treatment.   It is known that
$P$ has an $R$-module structure  under   the following action
$$
\begin{array}{ cccl}
\circ: & R  \times P  &\longrightarrow &  P   \\
                       &       (f , g) & \to  &  f  \circ g = f (\partial_{y_1}, \dots, \partial_{y_n})(g)
\end{array}
$$
where $  \partial_{y_i} $ denotes the partial derivative with respect to $y_i.$
If  we  denote by $ x^{\alpha}= x_1^{\alpha_1} \cdots x_n^{\alpha_n} $ and
$ y^{\beta}= y_1^{\beta_1} \cdots y_n^{\beta_n} $ then
$$
x^{\alpha} \circ y^{\beta} =
\left\{
\begin{array}{ll}
\frac{\beta !}{(\beta-\alpha)!}\; y^{ \beta-  \alpha}  & \text{ if }  \beta_i \ge \alpha_i \text{ for } i=1,\cdots, n\\ \\
0 & \text{ otherwise}
\end{array}
\right.
$$
where $\frac{\beta !}{(\beta-\alpha)!}=\prod_{i=1}^n  \frac{\beta_i !}{(\beta_i-\alpha_i)!}$.

If $S \subseteq P,  $   then its annihilator   in $R$ is $ Ann_R(S)= \{ f \in R \ : \ f \circ g= 0 \ \mbox{ for \ \ all \ } g \in S\}.$  The $R$-submodule generated by $S$ in $P$ is closed under partial derivation, then $Ann_R (S) $ is an ideal of $R. $
  Conversely, for any ideal  $I\subset R$ we define the following $ R$-submodule of $P: $
$$
 {I^{\perp}}:=\{g\in P\ |\  \langle f, g \rangle = 0 \ \ \forall f  \in I \ \}= \{ g \in P\ |\  I \circ g = 0    \}.
$$
\noindent
Starting from an Artin local algebra   $R/I$ with  socle-degree  $s$, then $  {I^{\perp}} $ is generated by polynomials  of degree $\le s. $

If $S$ is generated by a sequence $\underline G := G_1, \dots, G_t $ of polynomials of $P, $ then we will write $Ann_R (\underline G) $ and $$A_{\underline G}= R/ Ann_R (\underline G) $$
where $Ann_R (\underline G) = \cap_{j=1}^t Ann_R(G_j).$ If $A_{\underline G} $ is compressed, then we may assume that each $A_{G_j}$ is an extremal Gorenstein algebra.
Notice that even if each $A_{G_j}$ is canonically graded, then  $A_{\underline G} $ is not necessarily canonically graded because it could not exists an uniform analytic isomorphism  (see Example \ref{t=2}).

F. S. Macaulay in \cite{mac16}
  proved that there exists a one-to-one correspondence between  ideals $I\subseteq R  $ such that $R/I$ is an Artin local ring and $R$-submodules  $M$ of $P$ which are finitely generated, see also  Emsalem
  \cite[ Section B, Proposition 2]{Ems78} and Iarrobino
  \cite[Lemma 1.2]{Iar94}.
 In this correspondence,   Artin  $K$-algebras $A=R/I$ of embedding dimension $n,$ socle degree $s$ and type $t$ correspond to $R$-submodules of $P$ generated by $t$ polynomials $G_1, \dots, G_t$ of degree $d_i$ (depending on the socle type) such that the leading     forms $G_1[d_1], \dots, G_t[d_t] $ of degree $d_i$ (write $G_i=G_i[d_i] + {\text{ lower \ terms}} \dots $)  are linearly independent.
If $ A= A_{\underline G}  $ where $\underline G := G_1, \dots, G_t  $  is $s$-level local algebra of  type $t $ ($d_i=s $ for all $i=1,\dots,t $),      then the associated graded ring $gr_{\m}(A) $ is not necessarily level and in general   $\dim_K Soc(gr_{\m}(A) ) \ge t. $ Actually  the graded $K$-algebra
 $$Q = P/Ann_P(\underline G[s])$$
(where $\underline G[s] := G_1[s], \dots, G_t[s]  $)  is a graded $s$-level algebra of type $t$ and it is the unique quotient of $gr_{\m}(A)   $  with socle degree $s$ and type $t.$ In particular $gr_{\m}(A)  $ is level if and only if $gr_{\m}(A) \simeq Q $ (see \cite{DeS12}).

\vskip 3mm
The following result was proved   in \cite[Proposition 3.7 and Corollary 3.8]{Iar84}.

 \begin{proposition} \label{G} The  compressed  local algebra $A$ whose dual module is generated by $G_1, \dots, G_t$ of degrees  $d_1, \dots, d_t $ has a compressed associated graded ring  $gr_{\m}(A) $ whose dual module is generated by $G_1[d_1], \dots, G_t[d_t],  $  the leading  forms of  $G_1, \dots, G_t. $  Conversely if   $gr_{\m}(A) $ is compressed, then $A$ is compressed and  $E(A)=E(gr_{\m}(A)). $
 \end{proposition}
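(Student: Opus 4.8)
The plan is to work entirely on the Macaulay dual side. Write $M := I^{\perp}\subseteq P$ for the dual module of $A$, minimally generated over $R$ by $\underline G = G_1,\dots,G_t$ with $\deg G_i = d_i$ and linearly independent leading forms $G_i[d_i]$, so that $e_j = \#\{\, i \mid d_i = j\,\}$. First I would record the two standard translations: $\HF(A) = \HF(gr_{\m}(A))$, and the dual module of $gr_{\m}(A)$ is the graded $R$-submodule $M^{*}\subseteq P$ spanned by the leading forms of all elements of $M$, i.e. $gr_{\m}(A) = P/I^{*}$ and $(I^{*})^{\perp} = M^{*}$ (a quick dimension count, together with the elementary fact that a leading form of an element of $I$ kills every leading form of an element of $M$, checks that $(I^{*})^{\perp}=M^{*}$). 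Since each $G_i[d_i]\in M^{*}$, the submodule $N := R\circ G_1[d_1] + \dots + R\circ G_t[d_t]$ lies in $M^{*}$, and a short argument — using that the $G_i$ minimally generate $M$ with independent leading forms, so that below the generating degrees $M$ needs no generators and hence no lower-degree relation can make a $G_i[d_i]$ redundant in $N$ — shows the $G_i[d_i]$ minimally generate $N$ in the same degrees $d_i$. Thus, given that $A$ is compressed, the first half of the statement reduces to showing $M^{*} = N$: granting $M^{*}=N$, the algebra $gr_{\m}(A) = P/N^{\perp}$ has dual module $N$, minimally generated by $G_1[d_1],\dots,G_t[d_t]$, so $E(gr_{\m}(A)) = E(A)$, and since $\HF(gr_{\m}(A)) = \HF(A)$ is the maximal Hilbert function for socle type $E(A)$, $gr_{\m}(A)$ is compressed.

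So the core is: $A$ compressed $\Longrightarrow M^{*} = N$. Here I would use that, as noted before the statement, $A$ compressed forces each $A_{G_i}$ to be extremal Gorenstein of socle degree $d_i$; hence its associated graded ring $A_{G_i[d_i]} = P/\ann_P(G_i[d_i])$ attains the maximal Gorenstein Hilbert function, so $\dim_K(R_{d_i-k}\circ G_i[d_i]) = \min\{\dim_K R_k,\ \dim_K R_{d_i-k}\}$ for all $k$. From $N\subseteq M^{*}$ and the compressedness of $A$ one gets $\dim_K N_k \le \dim_K M^{*}_k = h_k(A) = \min\{\dim_K R_k,\ \sum_i\dim_K R_{d_i-k}\}$, so everything reduces to the reverse inequality $\dim_K N_k \ge \min\{\dim_K R_k,\ \sum_i\dim_K R_{d_i-k}\}$. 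The mechanism I would invoke is that the maximality of the total length $\dim_K A = \dim_K M = \dim_K(\sum_i R\circ G_i)$ among socle-type-$E(A)$ algebras forces the cyclic submodules $R\circ G_i$ of $M$ to be in general position — overlapping only as much as the ambient dimensions require — and, taking degree-$k$ leading forms, this says exactly that $\sum_i R_{d_i-k}\circ G_i[d_i]\subseteq P_k$ has the maximal possible dimension $\min\{\dim_K R_k,\ \sum_i\dim_K R_{d_i-k}\}$, giving $\dim_K N_k = \dim_K M^{*}_k$ and hence $N = M^{*}$.

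For the converse I would argue by a length comparison, with two soft inputs. First, $\HF(A) = \HF(gr_{\m}(A))$, so if $gr_{\m}(A)$ is compressed of socle type $E' := E(gr_{\m}(A))$ then $\dim_K A = \dim_K\bigl(gr_{\m}(A)\bigr)$ equals the maximal length of an algebra of socle type $E'$. Second, the socle cannot move up in degree under passage to the associated graded ring, so $E(A)\le E'$ componentwise; since the compressed length $\sum_j\min\{\dim_K R_j,\ \sum_{u\ge j}e_u\dim_K R_{u-j}\}$ is componentwise monotone in $(e_u)$, this gives $(\text{max length for }E(A))\le(\text{max length for }E')$. Combined with the trivial bound $\dim_K A\le(\text{max length for }E(A))$, these force $\dim_K A = (\text{max length for }E(A))$, i.e. $A$ is compressed; then the first half, applied to $A$, yields $E(gr_{\m}(A)) = E(A)$, that is $E' = E(A)$.

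The hard part will be the general-position claim in the middle paragraph: that for a compressed $A$ the cyclic submodules $R\circ G_i$ of the inverse system meet only in the forced dimensions — equivalently, that the leading forms $G_i[d_i]$ are automatically generic enough to make $\sum_i R_{d_i-k}\circ G_i[d_i]$ of maximal dimension in every degree. This is the only place where compressedness is used in a genuinely non-formal way; reducing to the extremal Gorenstein Hilbert functions of the individual $A_{G_i}$ only shows each cyclic piece is as large as possible, and establishing that the pieces are mutually in general position is the substance of Iarrobino's argument in \cite[Prop.~3.7 and Cor.~3.8]{Iar84}. A secondary point to verify carefully is the soft lemma that $E(A)\le E(gr_{\m}(A))$ componentwise.
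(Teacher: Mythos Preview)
The paper does not prove this proposition at all: it simply records that ``the following result was proved in \cite[Proposition 3.7 and Corollary 3.8]{Iar84}'' and moves on. So there is no in-paper argument to compare against.

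Your outline is, in effect, a more explicit version of the same deferral. You correctly set up the dual-side translation $(I^{*})^{\perp}=M^{*}$, correctly reduce the forward direction to the equality $N=M^{*}$, and then --- at the step where one must show that the leading forms $G_i[d_i]$ span a subspace of $P_k$ of the maximal possible dimension in every degree --- you identify this as ``the substance of Iarrobino's argument in \cite[Prop.~3.7 and Cor.~3.8]{Iar84}'' and stop. That is honest, but it means your proposal, like the paper, ultimately rests on the cited source for the non-formal content. One small imprecision: the paper says one \emph{may assume} each $A_{G_i}$ is extremal Gorenstein, not that it is automatic for an arbitrary minimal generating set; your phrasing ``$A$ compressed forces each $A_{G_i}$ to be extremal'' should be softened accordingly. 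Your converse via the inequality $E(A)\le E(gr_{\m}(A))$ componentwise (leading forms of socle elements are socle elements of $gr_{\m}(A)$) together with monotonicity of the compressed length in $E$ is correct and is a clean way to close the loop.
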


\noindent It  is possible to compute the Hilbert function of $A=R/I $ via the inverse system.
We define the following $K$-vector space:
\begin{equation} \label{H1}
 (I^{\perp})_i := {\frac{ I^{\perp} \cap P_{\le i} +  P_{< i}}{ P_{< i}}}.
\end{equation}
Then  it is known  that
\begin{equation} \label{H2}
h_i(R/I) = \dim_K    (I^{\perp})_i .
\end{equation}

\noindent Notice that in   the graded setting    $A= R/Ann(\underline G), $   then
\begin{equation} \label{H3}
 h_i(R/I) =\dim_K    (I^{\perp})_i = \dim_K \langle \partial_{s-i} \underline G \rangle
 \end{equation}
where $\partial_{s-i} $ denotes the partial derivatives of order $s-i.$ We  will   translate  the above equality  in terms of the rank of suitable matrices associated to $\underline G.$

Let $\Omega=\{ \omega_i \} $ be the canonical basis     of  $ R/\mathcal M^{s+1}  $ as a $K$-vector space consisting of the standard monomials  $x^{\alpha}$ ordered by the {deg-lex order with $x_1>\cdots >x_n$} and,   then the dual basis with respect to the
action  $\circ $ is the basis $\Omega^*=\{ \omega_i^* \} $ of $P_{\le j} $  where
$$
(x^{\alpha})^* = \frac 1 {\alpha !} y^{\alpha},
$$
in fact $\omega_i^* (\omega_j)= \langle \omega_j ,  \omega_i^* \rangle=\delta_{ij}$,
where $\delta_{ij}=0$ if $i\neq j$ and $\delta_{ii}=1$.

\bigskip
Given a  form $G$ of degree $s$ and an integer $q\le s, $  we denote by $\Delta^q(G)$
the $\binom{n-1+s-q}{n-1}\times\binom{n-1+q}{n-1}$ matrix whose    columns
are  the coordinates of $\partial_{\ui}(G)$, $|\ui|=q$,
with respect $(x^{L})^* = \frac 1 {L !} y^{L}$, $|L|=s-q$ ($\ui$ and $L$ are $n$-uples of integers). We will denote by $(L, \ui)$ the corresponding position in the matrix $\Delta^q(G). $ In the following  $ L + \ui $ denotes the sum in $\mathbb  N^n.$

\bigskip
\begin{proposition}
\label{HFIS}
Let $G\in P=K[y_1,\cdots,y_n]$ be a form of degree $s. $  Then
$$
h_{s-i}(A_G)=\rank(\Delta^{i}(G)) \le min \{ \binom{n-1+s-i}{n-1}, \binom{n-1+i}{n-1}\}
$$
for $i=0,\cdots,s$.
The equality holds if and only if $A_G$ is compressed.
\end{proposition}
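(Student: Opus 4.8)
The plan is to translate the inverse-system computation of the Hilbert function, equation~(\ref{H3}), into a statement about the rank of the matrices $\Delta^i(G)$, and then read off the upper bound and the equality case from \propref{G} together with \defref{compressed}.

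First I would fix $i$ with $0\le i\le s$ and unwind the definitions. By~(\ref{H2}) and~(\ref{H3}) we have $h_{s-i}(A_G)=\dim_K\langle\partial_i G\rangle$, the $K$-span of all $i$-th order partial derivatives $\partial_{\ui}(G)$ with $|\ui|=i$; these are forms of degree $s-i$ living in $P_{s-i}$. The matrix $\Delta^i(G)$ was defined to have as its columns precisely the coordinate vectors of the $\partial_{\ui}(G)$, $|\ui|=i$, expressed in the dual monomial basis $(x^L)^*=\frac1{L!}y^L$ of $P_{s-i}$ ($|L|=s-i$). Hence the column space of $\Delta^i(G)$ is exactly the coordinate incarnation of $\langle\partial_i G\rangle$, so $\rank(\Delta^i(G))=\dim_K\langle\partial_i G\rangle=h_{s-i}(A_G)$. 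That gives the displayed equality.

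For the inequality, the matrix $\Delta^i(G)$ has $\binom{n-1+s-i}{n-1}$ rows (indexed by exponents $L$ with $|L|=s-i$, i.e.\ a basis of $P_{s-i}$) and $\binom{n-1+i}{n-1}$ columns (indexed by $\ui$ with $|\ui|=i$, i.e.\ a basis of $R_i$ up to the module structure), so its rank is at most the minimum of the two; note these two binomials are $\dim_K R_{s-i}$ and $\dim_K R_i$ in the notation used in \defref{compressed} for the Gorenstein ($t=1$) case. Thus $h_{s-i}(A_G)\le\min\{\dim_K R_{s-i},\dim_K R_i\}$ for all $i$, which is the asserted bound and also the bound appearing in \defref{compressed} with $s$ replaced by the appropriate index.

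Finally, for the equality case: since $A_G$ is a Gorenstein ($t=1$, a single generator $G$ of degree $s$) local algebra, $A_G$ is compressed precisely when $h_j(A_G)=\min\{\dim_K R_j,\dim_K R_{s-j}\}$ for every $j$, by \defref{compressed}. Setting $j=s-i$, this says exactly that $\rank(\Delta^i(G))$ attains the bound $\min\{\binom{n-1+s-i}{n-1},\binom{n-1+i}{n-1}\}$ for every $i=0,\dots,s$. (Here I am using that for a single form $G$ of degree $s$ the socle type is concentrated in degree $s$, so the ``otherwise'' branch of \defref{compressed} is subsumed: for $j<v$ the value $\dim_K R_j$ is automatically $\le\dim_K R_{s-j}$, and for $j\ge v$ the formula $\sum_{u\ge j}e_u\dim_K R_{u-j}$ reduces to $\dim_K R_{s-j}$.) Equivalently, invoking \propref{G}, $A_G$ is compressed if and only if its associated graded ring is compressed, and the latter is the graded Gorenstein algebra $P/\mathrm{Ann}_P(G[s])$ whose Hilbert function is $\rank(\Delta^i(G[s]))$; one checks the top-degree-form passage does not change the ranks $\rank(\Delta^i(G))$, closing the argument.

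The main obstacle I anticipate is purely bookkeeping: verifying carefully that the columns of $\Delta^i(G)$, as literally defined via the position labels $(L,\ui)$ and the dual basis $\frac1{L!}y^L$, really do reproduce the coordinates of $\partial_{\ui}(G)$ with the correct normalizing constants $\frac{\beta!}{(\beta-\alpha)!}$ coming from the $\circ$-action, so that the column span is $\langle\partial_i G\rangle$ on the nose and not merely up to a change of basis (which would still suffice for the rank statement, but it is cleaner to have the identification exact). Everything else is a direct appeal to~(\ref{H3}), \defref{compressed}, and \propref{G}.
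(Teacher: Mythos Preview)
Your argument is essentially the paper's own proof, spelled out in detail: the paper simply says the result is a ``straight consequence of (\ref{H1}) and (\ref{H2}), and the definition of the matrix $\Delta^{i}(G)$'', which is exactly your first two paragraphs (you invoke (\ref{H3}), but that is just the graded specialization of (\ref{H1})--(\ref{H2}), and $G$ is a form). One small remark: your final appeal to \propref{G} and the ``top-degree-form passage'' is unnecessary, since by hypothesis $G$ is already homogeneous, so $G=G[s]$ and $A_G$ is already graded; the equality case is then immediate from \defref{compressed} with $t=1$, as in your penultimate paragraph.
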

\begin{proof}
It is straight consequence of (\ref{H1}) and (\ref{H2}), and the definition of the
 matrix $\Delta^{i}(G)$ of size $ \binom{n-1+i }{n-1}\times  \binom{n-1+s-i }{n-1}. $
\end{proof}

\bigskip
\begin{proposition}
\label{derivative}
Let $G=  \sum_{|\uj|=s} \beta_\uj \; \frac{1}{\uj !} y^\uj$ be a degree $s$ homogeneous  polynomial.
Given an integer $q$,
the $(L, \ui)$-component of $\Delta^q(G)$, $|L|=s-q$, $|\ui|=q$, is
$$
\Delta^q(G)_{(L,\ui)}=\beta_{L + \ui }
$$
\end{proposition}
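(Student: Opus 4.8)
The plan is to unwind the definition of $\Delta^q(G)$ directly against the pairing $\langle\,,\,\rangle$ and the explicit formula for the action $x^\alpha\circ y^\beta$. By definition, the column of $\Delta^q(G)$ indexed by $\ui$ (with $|\ui|=q$) records the coordinates of the polynomial $x^\ui\circ G$, a form of degree $s-q$, in the dual basis $\{(x^L)^* = \tfrac{1}{L!}y^L : |L|=s-q\}$. So the entry $\Delta^q(G)_{(L,\ui)}$ is precisely the coefficient of $\tfrac{1}{L!}y^L$ in the expansion of $x^\ui\circ G$; equivalently, using the duality $\langle \omega_j,\omega_i^*\rangle = \delta_{ij}$, it equals $\langle x^L, x^\ui\circ G\rangle$ after the appropriate normalization, which I will compute head-on.

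First I would write $G = \sum_{|\uj|=s}\beta_\uj\,\tfrac{1}{\uj!}y^\uj$ and apply the differential operator $x^\ui$ term by term. Using the stated rule $x^\ui\circ y^\uj = \tfrac{\uj!}{(\uj-\ui)!}y^{\uj-\ui}$ when $\uj\ge\ui$ componentwise and $0$ otherwise, we get
$$
x^\ui\circ G = \sum_{\uj\ge\ui}\beta_\uj\,\frac{1}{\uj!}\cdot\frac{\uj!}{(\uj-\ui)!}\,y^{\uj-\ui} = \sum_{\uj\ge\ui}\beta_\uj\,\frac{1}{(\uj-\ui)!}\,y^{\uj-\ui}.
$$
Now substitute $L := \uj-\ui$, so $|L| = s-q$ and $\uj = L+\ui$; the sum becomes $\sum_{|L|=s-q}\beta_{L+\ui}\,\tfrac{1}{L!}y^L = \sum_{|L|=s-q}\beta_{L+\ui}\,(x^L)^*$. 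Reading off the coefficient of the basis vector $(x^L)^*$ gives exactly $\Delta^q(G)_{(L,\ui)} = \beta_{L+\ui}$, as claimed.

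The only real bookkeeping obstacle is matching conventions: one must check that the normalization $(x^\alpha)^* = \tfrac{1}{\alpha!}y^\alpha$ is the one used in the definition of $\Delta^q(G)$ (it is — the definition says the columns are expressed with respect to $(x^L)^* = \tfrac{1}{L!}y^L$), and that the index $\ui$ in the proposition (order $q$ derivatives, consistent with $h_{s-i}$ in Proposition~\ref{HFIS}) corresponds to column index with $|\ui|=q$ and row index $L$ with $|L|=s-q$. Once the indices are aligned, the computation above is the whole proof; there is no hidden difficulty, since the factor $\tfrac{\uj!}{(\uj-\ui)!}$ in the action cancels precisely against the $\tfrac{1}{\uj!}$ normalization of $G$ and reproduces the $\tfrac{1}{L!}$ normalization of the dual basis.
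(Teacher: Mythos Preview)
Your proof is correct and follows essentially the same approach as the paper: compute $\partial_{\ui}(G)$ term by term, observe that the factorial in the action cancels against the $\tfrac{1}{\uj!}$ normalization, and then reindex via $L=\uj-\ui$ to read off the coefficient $\beta_{L+\ui}$ of $(x^L)^*$. The paper's argument is exactly this computation, only more tersely stated.
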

\begin{proof}
Let us consider the derivative
$$
\partial_{\ui}(G)=  \sum_{|\uj|=s, \; \uj\ge \ui} \beta_\uj \; \frac{1}{(\uj-\ui) !} y^{\uj-\ui}
$$
$|\ui|=q$;
$\uj\ge \ui$ means that $j_w\ge i_w$ for $w=1,\cdots, n$.
If we write $L=\uj - \ui$, then
$$
\partial_{\ui}(G)=  \sum_{|L|=s-q} \beta_{L+\ui} \;
\frac{1}{L !} y^{L}
$$
From this we get the claim.
\end{proof}


\bigskip
It is easy to deduce the following corollary which gives via (\ref{H3}) an alternative proof of the fact that a graded Gorenstein algebra $A_G$ has symmetric Hilbert function.

\begin{corollary}
\label{transpose}
Let $G $ be a form of $P$ of degree $s$
Given an integer $i \le s$,
then   $$ \Delta^i(G)= ^t\Delta^{s-i}(G) $$
where $^t $ denotes the transpose matrix.

\end{corollary}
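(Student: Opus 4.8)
The plan is to read the entries of both matrices straight off Proposition~\ref{derivative} and check that transposition matches up the index sets and the entries.

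First I would fix notation $G=\sum_{|\uj|=s}\beta_\uj\,\frac{1}{\uj!}y^\uj$ and apply Proposition~\ref{derivative} twice. For $\Delta^i(G)$, with $q=i$: the rows are indexed by the $n$-tuples $L$ with $|L|=s-i$, the columns by the $n$-tuples $\ua$ with $|\ua|=i$, and the $(L,\ua)$-entry is $\beta_{L+\ua}$. For $\Delta^{s-i}(G)$, with $q=s-i$: the rows are indexed by the $n$-tuples $M$ with $|M|=i$, the columns by the $n$-tuples $\ub$ with $|\ub|=s-i$, and the $(M,\ub)$-entry is $\beta_{M+\ub}$. In particular the sizes $\binom{n-1+s-i}{n-1}\times\binom{n-1+i}{n-1}$ and $\binom{n-1+i}{n-1}\times\binom{n-1+s-i}{n-1}$ are transposes of one another, as required for the identity to even make sense.

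Next I would transpose: ${}^t\Delta^{s-i}(G)$ has rows indexed by the $\ub$ with $|\ub|=s-i$, columns indexed by the $M$ with $|M|=i$, and $(\ub,M)$-entry equal to $\beta_{M+\ub}$. Comparing with $\Delta^i(G)$, the row index set (weight $s-i$ tuples) and the column index set (weight $i$ tuples) coincide, and since addition in $\NN^n$ is commutative we have $\beta_{L+\ua}=\beta_{\ua+L}$; hence the two matrices agree entry by entry, i.e. $\Delta^i(G)={}^t\Delta^{s-i}(G)$. There is no real obstacle here: the only point requiring care is the row/column bookkeeping, namely that the convention ``columns are the iterated partials, rows are the monomial coordinates'' is applied consistently on both sides before transposing. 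As an immediate byproduct one gets $h_{s-i}(A_G)=\rank\Delta^i(G)=\rank{}^t\Delta^{s-i}(G)=\rank\Delta^{s-i}(G)=h_i(A_G)$, which is the asserted symmetry of the Hilbert function of a graded Gorenstein algebra $A_G$.
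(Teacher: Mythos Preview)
Your proof is correct and is exactly the argument the paper has in mind: the corollary is stated immediately after Proposition~\ref{derivative} with the remark that it is ``easy to deduce,'' and your entry-by-entry comparison using $\Delta^q(G)_{(L,\ui)}=\beta_{L+\ui}$ together with commutativity of addition in $\NN^n$ is precisely that deduction. The only implicit assumption, which you rightly flag, is that the same fixed monomial ordering on $\Omega$ is used to index rows and columns on both sides.
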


\bigskip

With the previous notation, let  $A_{\underline{G}}$ be a graded level algebra   We can define for all integer $i \le s$,
\begin{eqnarray} \label{delta}
\Delta^i (\underline{G}) = \left(
\begin{tabular}{c}
$\Delta^i(G_1)$ \\

\hline
$\vdots$ \\
\hline
$\Delta^i(G_t)$
\end{tabular} \right)
\end{eqnarray}
which is a $t  \binom{n-1+s-i}{n-1} \times \binom{n-1+i}{n-1} $ matrix.
We get  the following result.

\begin{proposition} \label{HFComp} Let $A=A_{\underline G} $ be a  compressed  $s$-level local algebra of type $t.$  Then for every $i=1,\dots, s $
$$h_i(A) = rank ( \Delta^i (\underline{G}[s] ) = min \{\binom{n-1+i}{n-1}, t  \binom{n-1+s-i}{n-1}\}.$$
\end{proposition}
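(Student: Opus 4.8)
The plan is to reduce the statement to a computation for a graded level algebra and then to express the relevant inverse–system dimension as the rank of $\Delta^i(\underline G[s])$ via \corref{transpose}. Concretely, since $A=A_{\underline G}$ is a compressed $s$-level local algebra of type $t$, \propref{G} applies with $d_1=\cdots=d_t=s$ and shows that $gr_\m(A)$ is again compressed, with dual module generated by the degree-$s$ leading forms $\underline G[s]=G_1[s],\dots,G_t[s]$; in particular $gr_\m(A)$ is itself level, so $gr_\m(A)\cong Q$. Since the Hilbert function of $A$ coincides with that of $gr_\m(A)$, it is enough to prove $h_i(gr_\m(A))=\rank(\Delta^i(\underline G[s]))$ for every $i$, i.e. to settle the graded case.

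For the graded level algebra $gr_\m(A)\cong Q$ I would invoke $(\ref{H3})$: $h_i(gr_\m(A))=\dim_K\langle\partial_{s-i}\,\underline G[s]\rangle$, the dimension of the subspace of $P_i$ spanned by all partial derivatives of order $s-i$ of $G_1[s],\dots,G_t[s]$. By \propref{derivative} and the definition of the matrices $\Delta^q$, the coordinates of these partials with respect to the dual basis $\Omega^*$ of $P_i$ are exactly the columns of the horizontally stacked block matrix $(\Delta^{s-i}(G_1[s])\mid\cdots\mid\Delta^{s-i}(G_t[s]))$, so $h_i(gr_\m(A))$ is the rank of this matrix. Now \corref{transpose} gives $\Delta^{s-i}(G_k[s])={}^t\Delta^i(G_k[s])$ for each $k$, whence this block matrix is precisely the transpose of the vertically stacked matrix $\Delta^i(\underline G[s])$ from $(\ref{delta})$. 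As rank is transpose–invariant, $h_i(A)=\rank(\Delta^i(\underline G[s]))$. Finally, since $A$ is compressed of socle degree $s$, type $t$ and embedding dimension $n$, \defref{compressed} gives $h_i(A)=\min\{\binom{n+i-1}{i},\,t\binom{n+s-i-1}{s-i}\}=\min\{\binom{n-1+i}{n-1},\,t\binom{n-1+s-i}{n-1}\}$, which is the asserted value (and equals the minimum of the numbers of columns and of rows of $\Delta^i(\underline G[s])$), completing the chain of equalities.

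The only real work is the index bookkeeping: I would have to track how the row index set $\{L:|L|=s-i\}$ and the column index set $\{\ui:|\ui|=i\}$ of each $\Delta^i(G_k)$ behave under transposition and under the vertical stacking that forms $\Delta^i(\underline G)$, and check that the identity ``horizontally stacking the $\Delta^{s-i}(G_k)$ computes $(I^\perp)_i$, while its transpose is the vertical stack $\Delta^i(\underline G)$'' is invoked with the correct value $q=s-i$. Beyond lining up these indices no genuine difficulty remains, since \propref{G}, formula $(\ref{H3})$ and \corref{transpose} are all already available.
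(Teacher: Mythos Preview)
Your proposal is correct and follows essentially the same route as the paper's own proof: reduce to the graded algebra $Q=P/\ann(\underline G[s])$ via \propref{G} and the fact that a level $gr_\m(A)$ coincides with $Q$, then read off $h_i$ from the inverse system. The paper compresses the second step into a single appeal to \propref{HFIS} (stated only for a single form), whereas you spell out explicitly, using \corref{transpose} and the horizontal/vertical stacking, why the rank of $\Delta^i(\underline G[s])$ computes $\dim_K\langle\partial_{s-i}\underline G[s]\rangle$ in the type~$t$ case; this extra bookkeeping is exactly the content the paper leaves implicit.
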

\begin{proof} By Proposition \ref{G} we know that $gr_{\m}(A)  $ is level compressed of socle degree $s$ and type $t. $ Since   $gr_{\m}(A)  $ is level if and only if $gr_{\m}(A) \simeq Q= P/Ann (\underline{G}[s] ),   $ the result follows by Proposition \ref{HFIS}.
\end{proof}


\bigskip
\section{Automorphisms of Artin local algebras  and associated matrices}

Given a $K$-algebra $C, $ we will denote by $Aut (C) $ the group of the automorphisms of $C$ as a $K$-algebra and by $Aut_K(C) $ as a $K$-vector space.
The  automorphisms   of $ R$ as  a $K$-algebra are well known. They act as replacement of $x_i$ by
$z_i $, $i=1,\cdots, n$, such that ${\mathcal M }= (x_1,
\dots, x_n)= (z_1, \dots, z_n). $
Actually, since $ {\mathcal M }^{s+1}  \subseteq I, $ we are interested in the   automorphisms of $R/ {\mathcal M}^{s+1} $   of $K$-algebras  induced by
the projection $\pi:R\longrightarrow R/{\mathcal M }^{s+1}.$
Clearly    Aut$(R/ {\mathcal M}^{s+1})  \subseteq $   Aut$_K (R/ {\mathcal M}^{s+1}).   $
For  all $p \ge 1$,
$I_p$ denotes  the   identity matrix of order $\binom{n+p-1}{p}.$

 For any
$\varphi  \in  Aut_{K}(R/{\mathcal M }^{s+1}) $ we may associate   a    matrix $ M(\varphi)$   with respect to the basis $\Omega$ of size $ r= dim_K (R/{\mathcal M }^{s+1}) =   {{n+s} \choose s}.  $
\noindent Given  $I $ and $ J $   ideals of $R$ such that $\mathcal M^{s+1}\subset I, J, $ there
  exists an  isomorphism of  $K$-algebras
$$
\varphi : R/I \to R/J
$$
if and only if $ \varphi  $ is canonically induced  by  a $K$-algebra automorphism   of $ R/{\mathcal M }^{s+1} $  sending  $I/{\mathcal M }^{s+1}$ to $J/{\mathcal M }^{s+1}.$
In particular $\varphi$ is an isomorphism of $K$-vector spaces. Passing to $Hom(\ ,K), $   $\varphi $  corresponds to
$$
\varphi^* :  {J^{\perp}}   \to  {I^{\perp}}
$$
where   $^t M (\varphi) $ is the matrix associated to $\varphi^* $
with respect to the basis $\Omega^*  $ of $P_{\le s}.$

\noindent
We denote by ${\mathcal R}$ the  subgroup of  $Aut_{K}(P_{\le s})$    (automorphisms of $ P_{\le s} $ as a $K$-vector space)
   represented     by
 the matrices $^t M (\varphi) $ of  $Gl_r(K)$ with $\varphi\in Aut (R/{\mathcal M }^{s+1}) $.

By Emsalem,   \cite[Proposition 15]{Ems78},
the classification,  up to analytic isomorphism,   of the Artin  local $K$-algebras of multiplicity $d, $ socle degree $s$ and embedding dimension $n$ is equivalent to the classification,
 up to the action of ${\mathcal R},  $ of the $K$-vector subspaces of $P_{\le s}$  of dimension $d, $ stable by derivations and containing $P_{\le 1 }=K[y_1, \dots, y_n]_{\le 1}.$

 \bigskip

 Our goal is to translate the study of analytic isomorphisms   of   Artin level $K$-algebras  $A=R/I $  of socle $s$ and type $t$  in terms of the corresponding dual polynomials  of degree $s $ in $P $ in an  effective computational framework.  This section is quite technical and the idea is   the generalization of the method used  in \cite{ER12}.   The main goal is   Corollary \ref{rankM} and,  in view of it,    the machinery  could  be presented  assuming
$s\le 4.$ Actually the presentation does not improve substantially and one   loses   the control of  the general case that could be useful for further investigations.

\bigskip

\noindent  Let   ${\underline F} = F_1, \dots, F_t,  $ respectively
${\underline G} = G_1, \dots, G_t,  $  be polynomials of degree $s. $ Let
 $\varphi  \in Aut (R/ \mathcal M ^{s+1}), $  from the previous facts we have
 \begin{equation}
\varphi(A_{\underline F})=A_{\underline G} \ \ \text{ if\  and \ only\  if } \  (\varphi^*)^{-1}(\langle {\underline F}  \rangle_R)=\langle {\underline G} \rangle_R.
 \end{equation}

  If    $F_i  =  b_{i1} w_1^*+ \dots b_{ir} w_r^*  \in P_{\le s}, $ then we will denote the {\it row vector} of the coefficients of the polynomial with respect to the basis  $\Omega^*$ by
    $$  [F_i]_{\Omega^*} = (b_{i1}, \dots, b_{ir}).$$


\noindent If  there exists   $\varphi  \in Aut (R/ \mathcal M ^{s+1}) $  such that
\begin{equation} \label{matrix}  [  G_i]_{\Omega^*} M(\varphi ) =   [  F_i]_{\Omega^*}, {\text{\ for \ every }} i=1, \dots, t, {\text{ then \ }} \varphi(A_{\underline F})=A_{\underline G}
\end{equation}

\noindent

Let $s$ be a positive  integer, the aim of this section is to provide a structure of the matrix $M(\varphi)$ associated
to special  $K$-algebra isomorphisms $\varphi \in Aut (R/{\mathcal M }^{s+1}). $
 Let $\varphi_{s-p} $ be   an automorphism of $R/\mathcal M^{s+1}$ such that
$\varphi_{s-p} = Id$ modulo $\m^{p+1}$,  with $1 \le p \le s,  $  that is
\begin{equation} \label{varphi}
\varphi_{s-p}(x_j)=x_j+ \sum_{|\ui|=p+1} a_{\ui}^j x^{\ui}+ \text{higher terms}
\end{equation}
for $j=1, \dots, n $ and  $ a_{\ui}^j \in K$ for each $n$-uple $ \ui$ such that $|\ui|=p+1.$
In the following we will denote
$\ua: = (a_{\ui}^1, |\ui|=p+1  ; \cdots; a_{\ui}^n, |\ui|=p+1 )\in K^{n \binom{n+p}{n-1}}. $

The  matrix associated to $\varphi_{s-p}, $ say $ M(\varphi_{s-p}),  $  is an  element of $Gl_r(K)$, $r=\binom{n+s}{s+1}$, with  respect to the basis $\Omega$
of $R/\mathcal M^{s+1}$.
We write
$ M(\varphi_{s-p})=(B_{i,j})_{0\le i, j \le  s}$
where $B_{i,j}$ is a $\binom{n+i-1}{i}\times \binom{n+j-1}{j}$ matrix of the coefficients of monomials of degree $i$ appearing in $\varphi(x^{\uj}) $ where $\uj=(j_1, \dots, j_n) $ such that $|\uj|=j.$

It is easy to verify that:
$$
B_{i,j}=
\left\{
  \begin{array}{ll}
    0, & 0\le i < j \le s, \text{ or } j=1, i=1,\cdots, s,\\ \\
    I_i,& i=j=0,\cdots, s,\\ \\
    0,  & j=s-p,\cdots, s-1, i=j+1,\cdots, s, \text{ and } (i,j)\neq (s,s-p).
  \end{array}
\right.
$$

\medskip

The matrix $M(\varphi_{s-p})$ has the following structure
$$ M(\varphi_{s-p})=\left(
\begin{array}{c|c|c|c|c|c|c|c|c}
1 & 0  & \cdots &0 &   0 & 0& 0 &0\\      \hline
0 & I_1&0  &0   &0  &0& 0 & \vdots\\   \hline
0 &0& I_2 &0&  0  &0& 0& \vdots\\   \hline
\vdots &\vdots  &0 &\ddots   &\vdots  & \vdots& \vdots& \vdots\\   \hline
 0 & B_{p+1,1}&0  &\dots &   I_{s-p }&0& 0& \vdots\\   \hline
0 &  \dots &B_{p+2,2}&0     & 0 &I_{s-p+1}&0& \vdots \\   \hline
0 & \dots &\cdots&\ddots &   \vdots &0 &\ddots &0\\      \hline
  0 & B_{s,1}&B_{s,2} & \dots    & B_{s,s-p} &0& \dots& I_s
\end{array}
\right)
$$
The entries of $B_{p+1,1}, B_{p+2,2},  \dots, B_{s,s-p} $ are linear forms in the variables
$a_{\ui}^j$, with  $|\ui|=p+1$, $j=1,\cdots,n$.
We are mainly interested in $ B_{s,s-p}   $ which is a   $\binom{n+s-1}{s}\times \binom{n+s-p-1}{s-p}$ matrix whose columns correspond to $x^W$ with $|W|=s-p $ and the rows correspond to the coefficients of $x^L$ with $|L|=s $ in $\varphi(x^W).$
 One has
$$
\varphi(x^W)=x^W + \sum_{j=1}^n w_j (\sum_{|\ui|=p+1} a_{\ui}^j x^{W-\delta_j+\ui}) + \dots
$$
where $W=(w_1,\dots,w_n)\in\mathbb N^n$ with $|W|=s-p$, here
$\delta_j$  is the $n$-uple with $0$-entries but $1$ in position $j$. We remark that $|W-\delta_j+\ui|=s. $
Then
the entry of $B_{s,s-p}$ corresponding to the $L$ row, $|L|=s$,  and $W$ column, $|W|=s-p$,
is
\begin{equation}
\label{entriesB}
(B_{s,s-t})_{L,W}=\sum_{W-\delta_j+\ui=L} w_j a_{\ui}^j.
\end{equation}

\bigskip
Let  $F, G$ be  polynomials of degree $s$ of $P   $   and let $\varphi_{s-p} $ be a  $K$-algebra isomorphism   of type  (\ref{varphi})  sending   $A_F$ to $A_G. $ We denote by $F[j] $ (respectively $G[j]$) the homogeneous component  of degree $j$ of $F$ (respectively of $G$), that is   $F=F[s]+F[s-1]+ \dots  $ ($G=G[s]+G[s-1]+ \dots$).

\par \noindent  By  (\ref{matrix}) we have  \begin{equation} \label{hom} [G]_{\Omega^*} M(\varphi_{s-p}) = [F]_{\Omega^*}, \end{equation}  in particular we deduce
\begin{equation}
\label{killing}
[F[j]]_{\Omega^*}=
\left\{
  \begin{array}{ll}
    [G[s-p]]_{\Omega^*}+  [G[s]]_{\Omega^*}B_{s,s-p},& j=s-p, \\ \\
    {[G[j]]}_{\Omega^*},  & j=s-p+1,\cdots, s.
  \end{array}
\right.
\end{equation}

\medskip

We are going to study  $ [G[s]]_{\Omega^*}B_{s,s-p}.  $
 Let $[\alpha_\ui ]$ be the vector of  the coordinates of $G[s]$ w.r.t.  $\Omega^*$, i.e.
$$
G[s]=  \sum_{|\ui|=s} \alpha_\ui \; \frac{1}{\ui !} y^\ui;
$$
the entries of
$[G[s]]_{\Omega^*}B_{s,s-p}$
are bi-homogeneous forms  in  the components of $[\alpha_\ui ]$ and   $\ua=(a_{\ui}^1, \dots, a_{\ui}^n) $ such that $|\ui|=p+1$
of bi-degree $(1,1)$.
Hence there exists a matrix $ M^{[s-p]}(G[s])$ of size
$  \binom{n-1+s-p}{n-1} \times  n \binom{n+p}{n-1} $
and  entries   in  the $K[\alpha_\ui ] $ such that

\begin{equation}
\label{matrixM}
\trans( [\alpha_\ui ]  B_{s,s-p}) = M^{[s-p]}(G[s])  \trans \ua
\end{equation}
where $\trans\ua $ denotes the transpose of the row-vector $\ua.$ We are going to describe the entries of $ M^{[s-p]}(G[s]).$
We label the columns of $ M^{[s-p]}(G[s])$ with the set of indexes
$(j,\ui)$, $j=1,\cdots,n$, $|\ui|=p+1$, corresponding to the
entries of $\ua = (a_{\ui}^1, |\ui|=p+1  ; \cdots; a_{\ui}^n, |\ui|=p+1 )\in K^{n \binom{n+p}{n-1}}$.

\bigskip
\begin{lemma}
\label{entriesMG}
The entry of $ M^{[s-p]}(G[s])$ corresponding to the $W$-row, $|W|=s-p$, and column
$(j,\ui)\in \{1,\cdots, n\}\times \{\ui ; |\ui|=p+1\}$ is
$$
M^{[s-p]}(G[s])_{W, (j,\ui)}= w_j\  \alpha_{W-\delta_j + \ui}.
$$
\end{lemma}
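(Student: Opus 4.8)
The plan is to read off the entries of $M^{[s-p]}(G[s])$ directly from the defining relation (\ref{matrixM}) by expanding the product $[\alpha_\ui]\,B_{s,s-p}$ with the help of the explicit description of $B_{s,s-p}$ obtained in (\ref{entriesB}). First I would record the two ingredients: the row vector $[\alpha_\ui]=[G[s]]_{\Omega^*}$ is indexed by the $n$-uples $L$ with $|L|=s$, its $L$-th component being $\alpha_L$, the coefficient of $\tfrac1{L!}y^L$ in $G[s]$; and $B_{s,s-p}$ is the $\binom{n+s-1}{s}\times\binom{n+s-p-1}{s-p}$ matrix whose $(L,W)$-entry, for $|L|=s$ and $|W|=s-p$, is $\sum_{W-\delta_j+\ui=L} w_j\,a_\ui^j$, where the sum runs over the pairs $(j,\ui)$ with $1\le j\le n$, $|\ui|=p+1$.

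Next I would compute the $W$-component of $[\alpha_\ui]\,B_{s,s-p}$, namely
$$
\big([\alpha_\ui]\,B_{s,s-p}\big)_W \;=\; \sum_{|L|=s}\alpha_L\,(B_{s,s-p})_{L,W}
\;=\;\sum_{|L|=s}\ \sum_{\substack{1\le j\le n,\ |\ui|=p+1\\ W-\delta_j+\ui=L}} w_j\,\alpha_L\,a_\ui^j .
$$
For each fixed $j$ and each fixed $\ui$ with $|\ui|=p+1$, the index $L=W-\delta_j+\ui$ is determined (and has $|L|=s$), so interchanging the order of summation and eliminating $L$ gives
$$
\big([\alpha_\ui]\,B_{s,s-p}\big)_W \;=\; \sum_{j=1}^n\ \sum_{|\ui|=p+1} w_j\,\alpha_{W-\delta_j+\ui}\,a_\ui^j .
$$
Here one must note that $W-\delta_j$ may have a negative entry; this occurs precisely when $w_j=0$, in which case the corresponding term vanishes and the expression is well defined (equivalently, set $\alpha_{W-\delta_j+\ui}:=0$ whenever $W-\delta_j\notin\NN^n$, consistently with the computation of $B_{s,s-p}$).

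Finally, by the definition (\ref{matrixM}) the $(W,(j,\ui))$-entry of $M^{[s-p]}(G[s])$ is the coefficient of $a_\ui^j$ in $\big([\alpha_\ui]\,B_{s,s-p}\big)_W$ regarded as a linear form in the entries of $\ua$; from the last displayed identity this coefficient is exactly $w_j\,\alpha_{W-\delta_j+\ui}$, which is the asserted formula. The argument is a pure computation, and the only point that calls for a little care is the bookkeeping of the multi-indices together with the degenerate case $w_j=0$; no genuine obstacle arises.
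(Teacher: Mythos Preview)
Your proof is correct and follows essentially the same route as the paper: expand $([G[s]]_{\Omega^*}B_{s,s-p})_W$ using the formula (\ref{entriesB}) for the entries of $B_{s,s-p}$, swap the order of summation, and read off the coefficient of $a_{\ui}^j$ from the defining relation (\ref{matrixM}). Your extra remark on the degenerate case $w_j=0$ is a small clarification the paper leaves implicit.
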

\begin{proof}
Given $W$, $|W|=s-p$, the coordinate of $[G[s]]_{\Omega^*}B_{s,s-p}$ with respect $(x^W)^*$ is, by (\ref{entriesB}),
\begin{multline*}
([G[s]]_{\Omega^*}B_{s,s-p})_{(x^W)^*}=
\sum_{|L|=s} (B_{s,s-p})_{W,L}  \ \alpha_L=\\
=\sum_{|L|=s} \left(\sum_{W-\delta_j+\ui=L} w_j \ a_{\ui}^j\right) \alpha_L=
\sum_{j, |\ui|=p+1} w_j \ \alpha_{W-\delta_j+\ui}\  a_{\ui}^j
\end{multline*}
so the entry of $ M^{[s-p]}(G[s])$ corresponding to the $W$ row, $|W|=s-p$, and column
$(j,\ui)\in \{1,\cdots, n\}\times \{\ui ; |\ui|=p+1\}$ is
$\ w_j \alpha_{W-\delta_j + \ui}$.
\end{proof}

\bigskip
The goal is now to present  a structure of  $M^{[s-p]}(G[s])$  in terms of $G[s].$ In particular we will prove that the rank of  $M^{[s-p]}(G[s])$ can be expressed in terms of the Hilbert function of $A_{G[s]}. $ We need  further notations.

\noindent   For every $i=1,\cdots,n, $ we denote   $S^i_p$ the set of monomials $x^{\alpha}$ of degree $p$ such that $x^{\alpha}\in x_i (x_i,\cdots,x_n)^{p-1}, $  hence  $\#(S^i_p)={p-1+n-i \choose p-1}. $
 By definition  $S^1_p\cup \cdots \cup S^n_p$ is the   set of monomials of degree $p$    and
  \begin{equation}
   \label{SS}
 \textrm{
  the last\; }
  \binom{p-2+n-i}{p-1}
  \textrm{\;   elements of  \; }
  S^i_p \textrm{\;   correspond \ to  \; }
   \frac{x_i}{x_{i+1}} S^{i+1}_p,
  \end{equation}

  \noindent  For instance $S^1_3=\{x_1^3, x_1^2 x_2, x_1^2 x_3, x_1 x_2^2, x_1x_2x_3, x_1x_3^2\}$,
  $S^2_3=\{x_2^3,x_2^2x_3, x_2x_3^2\}$, $S^3_3=\{x_3^3\}$.
  We write $\log(x^{\alpha})=\alpha$ for all $\alpha \in \mathbb N^n$.

\begin{lemma}
\label{structureMG}
The matrix $M^{[s-p]}(G[s])$ has the following upper-diagonal structure
$$ M^{[s-p]}(G[s])=\left(
    \begin{array}{l|l|l|l|l}
M_1&  * &  \cdots & * & * \\      \hline
0 & M_2&   \cdots & * & * \\   \hline
\vdots & \vdots&  \vdots & \vdots & \vdots  \\      \hline
0 & 0& 0 &  M_{n-1}&*  \\ \hline
0 & 0& 0 & 0&  M_n
\end{array}
  \right)
$$
where $ M_j$ is a matrix of size $\binom{s-p-1+n-j}{s-p-1}\times \binom{n+p}{n-1}, $  $j =1,\cdots,n$, defined as follows:
the entries of $M_j$ are the entries of $M^{[s-p]}(G[s])$ corresponding to the
rows $W\in \log(S^j_{s-p})$ and columns $(j,\ui)$, $|\ui|=p+1$.
We label the entries of $M_j$ with respect to these multi-indexes.
Then it holds:
\begin{enumerate}
\item[(i)]
for all $W=(w_1,\cdots,w_n)\in \log(S^1_{s-p})$ and $\ui$, $|\ui|=p+1$,
$$
w_1 \Delta^{p+1}(G[s])_{(W-\delta_1,\ui)} ={M_1}_{(W,(1,\ui))},
$$
\item[(ii)]
for all $j=1,\cdots, n-1$, $W\in \log(S_{s-p}^{j+1})$,
$$
M_{j+1, (W,(j+1,*))}= w_{j+1} M_{j,(L,(j,*))}
$$
with
$L=\delta_j + W - \delta_{j+1}$,
\end{enumerate}
\end{lemma}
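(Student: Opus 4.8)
The plan is to establish the block upper-triangular shape of $M^{[s-p]}(G[s])$ first, then verify the two indexing identities (i) and (ii) by direct comparison of entries, using the explicit formula from \lemref{entriesMG}, namely $M^{[s-p]}(G[s])_{W,(j,\ui)} = w_j\,\alpha_{W-\delta_j+\ui}$. The block structure is the conceptual core; once it is in place, (i) and (ii) are essentially bookkeeping.

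First I would prove the upper-triangular shape. The rows of $M^{[s-p]}(G[s])$ are indexed by $W$ with $|W|=s-p$, and by the partition $S^1_{s-p}\cup\cdots\cup S^n_{s-p}$ of the degree-$(s-p)$ monomials, every such $W$ lies in exactly one $\log(S^j_{s-p})$; similarly the columns are indexed by pairs $(j',\ui)$, and I group them into blocks according to $j'$. So the block in row-group $j$ and column-group $j'$ has entries $w_j\,\alpha_{W-\delta_{j'}+\ui}$ for $W\in\log(S^j_{s-p})$. The key observation is: if $W\in\log(S^j_{s-p})$, then $w_k=0$ for all $k<j$ by definition of $S^j_{s-p}$ (monomials in $x_j(x_j,\dots,x_n)^{s-p-1}$ have no $x_1,\dots,x_{j-1}$). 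Hence the entry $w_{j'}\,\alpha_{W-\delta_{j'}+\ui}$ vanishes whenever the column-group index $j'$ satisfies $j'<j$, because then $w_{j'}=0$. This is exactly the statement that all blocks strictly below the diagonal (row-group $j$, column-group $j'$ with $j'<j$) are zero, giving the displayed upper-triangular form with diagonal blocks $M_j$ of the stated size $\binom{s-p-1+n-j}{s-p-1}\times\binom{n+p}{n-1}$ (the row count is $\#S^j_{s-p}=\binom{s-p-1+n-j}{s-p-1}$ and the column count is $\#\{\ui:|\ui|=p+1\}=\binom{n+p}{n-1}$).

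Next, for (i): if $W\in\log(S^1_{s-p})$ then the diagonal block $M_1$ has entry $M_{1,(W,(1,\ui))}=w_1\,\alpha_{W-\delta_1+\ui}$ by \lemref{entriesMG}. On the other hand, by \propref{derivative} applied to the form $G[s]$ of degree $s$ with $q=p+1$, we have $\Delta^{p+1}(G[s])_{(M,\ui)}=\alpha_{M+\ui}$ for $|M|=s-p-1$, $|\ui|=p+1$. Taking $M=W-\delta_1$ (which has $|M|=s-p-1$ and nonnegative entries since $w_1\ge 1$ for $W\in\log(S^1_{s-p})$) gives $\Delta^{p+1}(G[s])_{(W-\delta_1,\ui)}=\alpha_{W-\delta_1+\ui}$, hence $w_1\,\Delta^{p+1}(G[s])_{(W-\delta_1,\ui)}=M_{1,(W,(1,\ui))}$, which is (i). For (ii): fix $j\in\{1,\dots,n-1\}$ and $W\in\log(S^{j+1}_{s-p})$; then $w_k=0$ for $k\le j$ and $w_{j+1}\ge 1$. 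The entry of $M_{j+1}$ in row $W$, column $(j+1,\ui)$ is $w_{j+1}\,\alpha_{W-\delta_{j+1}+\ui}$. Setting $L=\delta_j+W-\delta_{j+1}$, one checks $|L|=s-p$, that $L$ has a nonzero $j$-th coordinate (so $L\in\log(S^j_{s-p})$ is in the domain of row-block $j$; here one uses the correspondence \eqref{SS} between the tail of $S^{j+1}_{s-p}$ under multiplication by $x_j/x_{j+1}$ and a subset of $S^j_{s-p}$), and that $L-\delta_j+\ui=W-\delta_{j+1}+\ui$. Therefore $M_{j,(L,(j,\ui))}=l_j\,\alpha_{L-\delta_j+\ui}=\alpha_{W-\delta_{j+1}+\ui}$ (noting $l_j=1$ since $W\in\log(S^{j+1}_{s-p})$ forces $w_j=0$, so $l_j=w_j+1=1$), and multiplying by $w_{j+1}$ yields $M_{j+1,(W,(j+1,\ui))}=w_{j+1}\,M_{j,(L,(j,\ui))}$, establishing (ii).

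I expect the main subtlety to be the careful identification in (ii) that the shifted index $L=\delta_j+W-\delta_{j+1}$ actually indexes a row of the preceding diagonal block $M_j$ — i.e. that $L\in\log(S^j_{s-p})$ — and that the relevant coordinate $l_j$ equals $1$; this is precisely where the structural fact \eqref{SS} about how the tail of $S^{j+1}_p$ sits inside $S^j_p$ via the substitution $x_{j+1}\mapsto x_j$ is used, and getting the multi-index arithmetic in $\NN^n$ exactly right (nonnegativity of all the shifted exponents, correct coordinate values) is the one place where care is needed. Everything else reduces to substituting the formula of \lemref{entriesMG} and the formula of \propref{derivative}.
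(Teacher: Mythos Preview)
Your proof is correct and follows essentially the same approach as the paper's: both establish the block upper-triangular structure from the vanishing $w_{j'}=0$ for $j'<j$ when $W\in\log(S^j_{s-p})$, and then verify (i) and (ii) by direct substitution of the entry formulas from \lemref{entriesMG} and \propref{derivative}. Your argument is in fact slightly more explicit than the paper's (you spell out that $l_j=1$ in (ii) and that $L\in\log(S^j_{s-p})$, both of which the paper leaves implicit), but the logic is identical.
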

\begin{proof}
First we prove that $M^{[s-p]}(G[s]$  has the upper-diagonal structure as in the claim.
Since  the entry of $ M^{[s-p]}(G[s])$ corresponding to the $W$ row, $|W|=s-p$, and column
$(j,\ui)\in \{1,\cdots, n\}\times \{\ui ; |\ui|=p+1\}$ is
$w_j  \alpha_{W-\delta_j + \ui}$, this entry is zero  if
$W\in \log(S_{s-p}^t)$ and $j<t \le n$.

\noindent
$(i)$
Notice that the set of multi-indexes $L=W-\delta_1$, $|W|=s-p$, agrees
with the set of  multi-indexes of degree $s-p-1$, see (\ref{SS}).
Hence by \propref{derivative} and \propref{entriesMG} we have
$$
w_1 \Delta^{p+1}(G[s])_{(W-\delta_1,\ui)} = w_1 \alpha_{W-\delta_1+\ui}=
{M_1}_{(W,(1,\ui))}.
$$

\noindent
$(ii)$
If $|\ui|=p+1$ and $L=\delta_j + W - \delta_{j+1}$, then, \propref{entriesMG},
$$
M_{j+1, (W,(j+1,*))}= w_{j+1} \alpha_{W-\delta_{j+1} + \ui}=
w_{j+1} \alpha_{L-\delta_j + \ui}=
w_{j+1} M_{j,(L,(j,*))}.
$$
\end{proof}

As a   consequence of $(i)$ and $(ii), $  the  matrix   $ \Delta^{p+1}(G[s])$ is strongly  involved in the computation of the rank of $M^{[s-p]}(G[s].$

\begin{corollary} \label{rankM} If $s\le 4$ then $\text{ rank }(M^{[s-p]}(G[s]))$ is maximal if and only if
$\text{ rank }(\Delta^{p+1}(G[s]))$ is maximal.
\end{corollary}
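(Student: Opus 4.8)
The plan is to read off $\rank(M^{[s-p]}(G[s]))$ from the block upper-triangular form of \lemref{structureMG} and to compare it with $\rank(\Delta^{p+1}(G[s]))$ through parts $(i)$ and $(ii)$ of that lemma. First I record the behaviour of the diagonal blocks. By $(i)$ the block $M_1$ is obtained from $\Delta^{p+1}(G[s])$ by multiplying each row by a positive integer $w_1$ (the only place the characteristic of $K$ is used), so $\rank(M_1)=\rank(\Delta^{p+1}(G[s]))$; iterating $(ii)$, each $M_j$ is, up to nonzero row scalings, the submatrix of $\Delta^{p+1}(G[s])$ whose rows are those indexed by the monomials of degree $s-p-1$ in $x_j,\dots,x_n$, whence $\rank(M_j)\le\rank(\Delta^{p+1}(G[s]))$ for all $j$. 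I will also use that a block upper-triangular matrix has rank at least the sum of the ranks of its diagonal blocks, so $\rank(M^{[s-p]}(G[s]))\ge\sum_{j=1}^n\rank(M_j)$.

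\textbf{If $\Delta^{p+1}(G[s])$ has maximal rank.} Here the hypothesis $s\le 4$ enters only through an elementary binomial check: since $s-p\le 3$ one has $\binom{n-1+s-p}{n-1}\le n\binom{n+p}{n-1}$ and $\binom{n-2+s-p}{n-1}\le\binom{n+p}{n-1}$, so the maximal rank of $M^{[s-p]}(G[s])$ equals its number of rows $\binom{n-1+s-p}{n-1}$, while $\Delta^{p+1}(G[s])$ has at least as many columns as rows, so its rank is maximal exactly when its rows are independent. If that holds, every subset of its rows is independent, hence each $M_j$ has maximal row rank $\binom{s-p-1+n-j}{s-p-1}$, and $\sum_{j=1}^n\binom{s-p-1+n-j}{s-p-1}=\binom{n-1+s-p}{n-1}$ by the hockey-stick identity. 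Combined with the block-triangular lower bound this forces $\rank(M^{[s-p]}(G[s]))=\binom{n-1+s-p}{n-1}$, i.e.\ $M^{[s-p]}(G[s])$ is of maximal rank.

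\textbf{The converse.} Suppose $\rank(\Delta^{p+1}(G[s]))$ is not maximal; by the previous paragraph its rows are dependent, so there is a nonzero vector $(d_L)_{|L|=s-p-1}$ with $\sum_{|L|=s-p-1}d_L\,\alpha_{L+\ui}=0$ for every $\ui$ with $|\ui|=p+1$, the $\alpha$'s being the coefficients of $G[s]$ as in \lemref{entriesMG}. The aim is to exhibit a nonzero left-kernel vector $(c_W)_{|W|=s-p}$ of $M^{[s-p]}(G[s])$; by \lemref{entriesMG} this is the system: for every $j$, $\sum_W w_j\,c_W\,\alpha_{W-\delta_j+\ui}=0$ for all $\ui$. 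The idea is to obtain $(c_W)$ as a suitable ``symmetrization'' of $(d_L)$, carried out separately for each of the finitely many pairs $(s,p)$ with $s\le 4$: when $s-p\le 2$ one reduces via \corref{transpose} to $\Delta^{s-p-1}(G[s])$ with $s-p-1\le 1$ — for instance, for $s-p=2$ take $(c_W)$ to be the coefficient vector of the square of the linear form $\sum_k d_{\delta_k}x_k$, so that $w_j c_W=2\,d_{\delta_j}\,d_{W-\delta_j}$ and the $j$-th equation collapses to $2\,d_{\delta_j}\sum_L d_L\,\alpha_{L+\ui}=0$ — while the one remaining shape $s-p=3$ (that is $s=4$, $p=1$) is treated by a direct computation with the explicit matrix $M^{[3]}(G[4])$.

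\textbf{The main obstacle.} The genuinely delicate point is this last implication. The block upper-triangular form only supplies $\rank(M^{[s-p]}(G[s]))\ge\sum_j\rank(M_j)$, and the off-diagonal blocks can in principle push $\rank(M^{[s-p]}(G[s]))$ strictly above $\sum_j\rank(M_j)$, so a rank deficiency of the diagonal blocks $M_j$ does not automatically produce one for $M^{[s-p]}(G[s])$; this is precisely why an explicit left-kernel vector must be produced, and why the bound $s\le 4$ (equivalently $s-p\le 3$) is used essentially, since it leaves only a short list of shapes for $M^{[s-p]}(G[s])$ whose off-diagonal entries, made explicit by \lemref{entriesMG}, can be controlled by hand. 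Once these cases are dispatched the two implications together give the stated equivalence.
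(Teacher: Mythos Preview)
Your forward implication --- $\Delta^{p+1}(G[s])$ of maximal rank forces $M^{[s-p]}(G[s])$ of maximal rank --- is correct and is exactly the paper's argument: for $s\le 4$ both matrices have no more rows than columns, the diagonal blocks $M_j$ are (up to nonzero row scalings) submatrices of $\Delta^{p+1}(G[s])$, and full row rank of the diagonal blocks in a block upper--triangular matrix gives full row rank of the whole matrix. Your hockey--stick count of the rows is a clean way to finish.

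The converse, however, cannot be salvaged in the case $s=4$, $p=1$ that you defer to ``direct computation''. Take $n=2$ and $G[4]=y_1^4+y_2^4$. Then
\[
\Delta^{2}(G[4])=\begin{pmatrix}24&0&0\\0&0&0\\0&0&24\end{pmatrix}
\]
has rank $2<3$, while a direct evaluation of \lemref{entriesMG} gives
\[
M^{[3]}(G[4])=\begin{pmatrix}72&0&0&0&0&0\\0&0&0&24&0&0\\0&0&24&0&0&0\\0&0&0&0&0&72\end{pmatrix},
\]
which has full row rank $4$. So for $s-p=3$ there is no left--kernel vector to produce, and your sketch for that case would fail; the ``only if'' direction of the corollary is in fact false as stated. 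The paper's own proof does not really justify this direction either --- it argues ``by rows'', which establishes only the forward implication --- but this is harmless, since the applications (\thmref{cangradedm=n}) use only $\Delta^{p+1}$ maximal $\Rightarrow$ $M^{[s-p]}$ maximal. Your careful treatment of the converse for $s-p\le 2$ is correct and is more than the paper provides, but you should flag the $s-p=3$ case as the point where the equivalence breaks down rather than claim it follows by computation.
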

\begin{proof}
Notice that $M^{[s-p]}(G[s])$ has an upper-diagonal structure
where the rows of the diagonal blocks $M_j$ are a subset of the  rows of the first block matrix $M_1$.
Let us assume  that the number of rows of $M_1$ is not bigger than  the number of columns of $M_1, $ as a consequence the same holds for $M_j$ with $j>1.$
Then we can compute the rank of $M^{[s-p]}(G[s]) $ by rows, so $\text{ rank }(M^{[s-p]}(G[s]))$ is maximal if and only if
$\text{ rank }(\Delta^{p+1}(G[s]))$ is maximal.
Since  $M_1$ is a
$\binom{s-p-2+n}{s-p-1}\times \binom{n+p}{n-1}$ matrix,
if $\binom{n+s-p-2}{s-p-1}=\binom{n+s-p-2}{n-1}\le\binom{n+p}{n-1}$ we get the result.
This inequality is equivalent to $n+s-p-2 \le n+p$, i.e. $s\le 2 p +2$, since $p\ge 1$ we get that
$s\le 4$.
\end{proof}

\medskip
 \exref{s5} shows that  Corollary \ref{rankM} fails for  $s=5$.

\bigskip
We may generalize the previous facts to a  sequence   $  \underline G=G_1, \dots, G_t $ of    polynomials of degree $s$ of $P.    $
 Let $\varphi_{s-p} $ be a  $K$-algebra isomorphism   of type  (\ref{varphi})  sending   $A_{\underline F}$ to $A_{\underline G} $ where  ${\underline F}=F_1, \dots,F_t . $ In particular we assume that,   as  in (\ref{hom}),
$$[G_r]_{\Omega^*} M(\varphi_{s-p}) = [F_r]_{\Omega^*}, $$
for every $r=1, \dots, t.$
We deduce the analogous of (\ref{killing}) and we restrict our interest to
$$   [\underline G[s]]_{\Omega^*}B^{\oplus t}_{s,s-p}$$
where
\begin{equation*}
B^{\oplus t}_{s,s-p}
:= \left(
\begin{tabular}{c}
$B_{s,s-p}$ \\

\hline
$\vdots$ \\
\hline
$B_{s,s-p}$
\end{tabular} \right)
\end{equation*}
obtained by gluing  $t$ times the matrix $B_{s,s-p}$   and  where  $ [\underline G[s]]_{\Omega^*}$ is the row $([G_r[s]]_{\Omega^*} : r=1, \dots, t).$
 Accordingly with (\ref{matrixM}),  it is  defined the matrix    $ M^{[s-p]}(G_r[s])$ of size
$  \binom{n-1+s-p}{n-1} \times  n \binom{n+p}{n-1} $
and  entries  depending on $  [\underline G[s]]_{\Omega^*}$  such that

\begin{equation*}
^t([G_r[s]]_{\Omega^*} B_{s,s-p}) = M^{[s-p]}(G_r[s])  \ ^t \ua
\end{equation*}

\noindent If we define
\begin{equation}
\label{multi}
M ^{[s-p]}({\underline G}[s])
:= \left(
\begin{tabular}{c}
$M^{[s-p]}(G_1[s])$ \\

\hline
$\vdots$ \\
\hline
$M^{[s-p]}(G_t[s])$
\end{tabular} \right)
\end{equation}
which is a $t  \binom{n-1+s-p}{n-1} \times n \binom{n+p}{n-1} $  matrix,     we get
\begin{equation}
\label{matrixMM}
\trans([\underline G[s]]_{\Omega^*}   B^{\oplus t}_{s,s-p}) = M^{[s-p]}({\underline G}[s])  \trans \ua.
\end{equation}
The matrix  $M ^{[s-p]}({\underline G}[s]) $ has the same shape of $M ^{[s-p]}({G}[s]),  $ already described in Lemma \ref{structureMG} and its  blocks correspond to suitable submatrices of $(\Delta^{p+1}(\underline G[s]))$ (see   (\ref{delta})).
Hence we have an analogous to (\ref{killing}) for the level case
\begin{equation}
\label{multikilling}
[F_r[j]]_{\Omega^*}=
\left\{
  \begin{array}{ll}
    [G_r[s-p]]_{\Omega^*}+ \ua \; \;  ^\tau(M^{[s-p]}({\underline G}[s])),& j=s-p, \\ \\
    {[G_r[j]]}_{\Omega^*},  & j=s-p+1,\cdots, s.
  \end{array}
\right.
\end{equation}
for all $r=1, \dots, t.$


\bigskip
\section{Compressed  Gorenstein  algebras}

Let $A=A_G$ be an compressed Artin Gorenstein local $K$-algebra $A$ of socle degree $s$. We recall that,
by Proposition \ref{HFComp}, if  $G=G[s]+G[s-1]+ \dots,  $ then
$$h_i(A) = rank ( \Delta^i ({G[s]}) = \min \{\binom{n-1+i}{n-1},   \binom{n-1+s-i}{n-1}\} $$
for every $i=1, \dots, s.$

\bigskip
\begin{theorem}
\label{cangradedm=n}
Let $A $ be an extremal  Artin  Gorenstein local $K$-algebra.
If $s\le 4$ then $A $ is canonically graded.
\end{theorem}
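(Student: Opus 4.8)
The plan is to peel off, one homogeneous degree at a time, the lower-order terms of the dual polynomial $G = G[s] + G[s-1] + \dots + G[v]$ by applying a sequence of carefully chosen automorphisms of the form $\varphi_{s-p}$ from (\ref{varphi}). Starting from $A = A_G$ with $G[s]$ the leading form, I want to produce a $K$-algebra isomorphism $A_G \cong A_{G[s]}$; since $A_{G[s]}$ is the associated graded ring $gr_{\m}(A)$ (this is Proposition~\ref{G} together with the compressed hypothesis), this gives the result. The induction is downward on the degree: suppose we have already reduced to the case $G = G[s] + G[s-p] + G[s-p-1] + \dots + G[v]$ for some $p \ge 1$, i.e. the top $p-1$ lower-order graded pieces have been killed. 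By (\ref{multikilling}) (in the Gorenstein case $t = 1$, so (\ref{killing})), applying $\varphi_{s-p}$ replaces $G[s-p]$ by $G[s-p] + \ua \cdot {}^\tau(M^{[s-p]}(G[s]))$ while leaving $G[s], G[s-1], \dots, G[s-p+1]$ unchanged. So the step I need is: the vector $[G[s-p]]_{\Omega^*}$ lies in the row space of $M^{[s-p]}(G[s])$, so that a suitable choice of $\ua$ makes the new degree-$(s-p)$ component vanish.

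That reduction to a surjectivity (rank) statement is exactly where Corollary~\ref{rankM} enters: for $s \le 4$, $\operatorname{rank}(M^{[s-p]}(G[s]))$ is maximal iff $\operatorname{rank}(\Delta^{p+1}(G[s]))$ is maximal, and the latter is $h_{s-p-1}(A_{G[s]}) = \min\{\binom{n-1+s-p-1}{n-1}, \binom{n-1+p+1}{n-1}\}$ by Proposition~\ref{HFIS}. Because $A$ is compressed, this is maximal. I then need that "maximal rank of $M^{[s-p]}(G[s])$" actually means the rows of $M^{[s-p]}(G[s])$ span all of $K^{\binom{n-1+s-p}{n-1}}$, i.e. the map $\ua \mapsto M^{[s-p]}(G[s])\,{}^\tau\ua$ is surjective onto the space in which $[G[s-p]]_{\Omega^*}$ lives; this is where the constraint $s \le 4$ is used again, since it forces $\binom{n+s-p-2}{n-1} \le \binom{n+p}{n-1}$ (number of rows $\le$ number of columns) as in the proof of Corollary~\ref{rankM}, so maximal rank is full row rank. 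Granting this, I pick $\ua$ with $\ua \cdot {}^\tau(M^{[s-p]}(G[s])) = -[G[s-p]]_{\Omega^*}$, and $\varphi_{s-p}$ kills the degree-$(s-p)$ part. Iterating $p = 1, 2, 3$ (at most, since $v \ge 1$ and for Gorenstein compressed of socle degree $\le 4$ there are only finitely many lower pieces, and in fact $v$ is controlled: the Hilbert function is $1, n, \dots$ so $v \ge 2$, meaning the pieces to kill are $G[s-1], \dots, G[v]$, at most degrees $3,2,1$ when $s=4$) exhausts all lower-order terms and yields $A_G \cong A_{G[s]}$.

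One subtlety to check carefully: after applying $\varphi_{s-p}$ to kill degree $s-p$, the automorphism may introduce or alter terms of degree $< s-p$ — but that is harmless, since those are killed at later stages of the induction, and crucially $\varphi_{s-p}$ does not disturb the already-cleared degrees $s-1, \dots, s-p+1$ (this is the content of the second branch of (\ref{killing})). A second point: the leading form $G[s]$ itself is unchanged throughout, so at every stage $A_{G[s]}$ (equivalently $gr_{\m}(\cdot)$) is the same compressed graded algebra, and the rank hypotheses remain valid at each step. The main obstacle is the linear-algebra bookkeeping in passing from "$\operatorname{rank} M^{[s-p]}(G[s])$ is maximal" to "$[G[s-p]]_{\Omega^*}$ is in the row space of $M^{[s-p]}(G[s])$": one must verify that the relevant target space for $[G[s-p]]_{\Omega^*}$ is precisely $K^{\binom{n-1+s-p}{n-1}}$ and that full rank there is genuinely attained — i.e. that the maximal value of the rank really equals the number of rows, which is exactly the inequality $s \le 4$ used in Corollary~\ref{rankM}. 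Everything else is a clean descending induction on the graded degree using the machinery of Section~2.
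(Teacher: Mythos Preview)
Your proposal is correct and follows essentially the same strategy as the paper: kill the lower-degree homogeneous components of $G$ via automorphisms $\varphi_{s-p}$, using \corref{rankM} together with the compressed hypothesis to guarantee that $M^{[s-p]}(G[s])$ has full row rank, so that the linear system (\ref{killing}) can be solved for $\ua$.

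The one noteworthy difference is in execution rather than in idea. The paper first disposes of $s\le 3$ by citing \cite{ER12}, and for $s=4$ invokes the shortcut that $P_{\le 2}\subseteq \langle G[4]\rangle_R$ (since $h_i(A_{G[4]})=\dim_K R_i$ for $i\le 2$ by compressedness), which lets one replace $G$ by $G[4]+G[3]$ at the outset and perform a \emph{single} killing step with $p=1$. You instead run the full descending induction over $p=1,2,3$, checking at each stage that $s\le 2p+2$ so that Corollary~\ref{rankM} yields full row rank. Both arguments are valid; yours is more uniform and self-contained (it does not need the separate citation for $s\le 3$, nor the observation about $P_{\le 2}$), while the paper's is shorter once those reductions are granted. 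Your remark that $\varphi_{s-p}$ may disturb degrees below $s-p$ but never the already-cleared degrees $s-p+1,\dots,s$ (second branch of (\ref{killing})) is exactly the point that makes the iteration legitimate, and your observation that $G[s]$ is preserved throughout so the rank hypotheses remain valid at every step is the other key bookkeeping check.
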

\begin{proof}
 Let  $A$ be a extremal   Artin  Gorenstein local $K$-algebra of socle degree $s\ge 2 $ and embedding dimension $n.$
Then $A=A_G$ with $G \in P=K[y_1, \dots, y_n] $ a polynomial of degree $s $ and $gr_{\m}(A)= P/\ann(G[s]) $
is an   extremal   Gorenstein graded algebra of socle degree $s\ge 2 $ and embedding dimension $n $ (see Proposition \ref{G}).

The main result of \cite{ER12} shows that if $s\le 3$ then $A$ is canonically graded.
Let assume $s=4,  $  then the Hilbert function is $\{ 1, n, {n+1 \choose 2}, n, 1\}.  $ Because $A_{G[4]} $ is an  extremal   Gorenstein algebra with the same Hilbert function of $A, $ we may assume $G=G[4]+G[3]. $ In fact  $P_1, P_2 \subseteq <G[4]>_R $ because of  (\ref{H1}) and, as a consequence,  it easy to see that   $ <G[4]+G[3]>_R = <G[4]+G[3]+ G[2]+...>_R.  $

So we have to prove that, however we fix $G[3]$, there exists an automorphism   $\varphi  \in Aut(R/\mathcal M^{5})  $  such that
$$A_{G} \simeq A_{G[4]}.$$
We consider for every $j=1, \dots, n $
$$
\varphi(x_j)=x_j+ \sum_{|\ui|=2} a_{\ui}^j x^{\ui}+ \text{higher terms}
$$
If $A_F=\varphi^{-1}_{3}(A_G), $ then from (\ref{killing}) and (\ref{matrixM}) we get

\begin{equation} \label{system}
\begin{array}{l}
[F[3]]_{\Omega^*}= [G[3]]_{\Omega^*}+ \ua \trans(M^{[3]}(G[4])) \\ \\
{[F[4]]}_{\Omega^*}= [G[4]]_{\Omega^*}
\end{array}
\end{equation}
where  $\ua=(a_{\ui}^1, \dots, a_{\ui}^n). $
By  Proposition \ref{HFComp} and  \corref{rankM}, we know that   the matrix  $M^{[3]}(G[4])$ has maximal rank and it coincides with the number of the rows, so
there exists a solution  $\ua \in K^n $  of (\ref{system})  such that $F[3]=0$ and $F[4]=G[4]$.
 \end{proof}

\bigskip
Let $A$ be a local $K$-algebra of embedding dimension $n $ and socle degree $s.$ By looking at the dual module, it is clear that if $s\le 2, $ then $A$ is graded because the dual module can be generated by homogeneous polynomials of degree at least two. The aim is now  to list the local compressed algebras of embedding dimension $n, $ socle degree $s$ and socle type $E =(0, \dots, e_{v-1}, e_v, \dots, e_s, 0, 0, \dots) $  which are canonically graded. Examples will prove  that the following result cannot be extended to higher socle degrees.

\bigskip
\begin{theorem}
\label{cangradedlevel}
Let $A $ be a compressed  Artin  $K$-algebra of  embedding dimension $n, $ socle degree $s$ and socle type $E.$ Then $A$ is canonically graded  in the following cases:
\begin{enumerate}
\item[(1)]
 $s \le 3$,

\item[(2)]
$ s=4$ and $e_4=1$,

\item[(3)]
 $s = 4$ and $n=2$.
\end{enumerate}
\end{theorem}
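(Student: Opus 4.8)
The strategy is to mirror the argument of \thmref{cangradedm=n}, but now with a level algebra presented by a sequence $\underline G = G_1,\dots,G_t$ of degree-$s$ polynomials, and to reduce each case to the statement that the associated matrix $M^{[s-p]}(\underline G[s])$ has maximal rank equal to its number of rows. The point is that, by (\ref{multikilling}), if $A_{\underline F} = \varphi_{s-p}^{-1}(A_{\underline G})$ for a $\varphi_{s-p}$ of type (\ref{varphi}), then
\[
[F_r[s-p]]_{\Omega^*} = [G_r[s-p]]_{\Omega^*} + \ua\,{}^\tau(M^{[s-p]}(\underline G[s])),
\qquad [F_r[j]]_{\Omega^*}=[G_r[j]]_{\Omega^*}\ (j>s-p),
\]
so one can kill all the degree-$(s-p)$ components simultaneously precisely when the linear system with coefficient matrix ${}^\tau(M^{[s-p]}(\underline G[s]))$ is solvable for every right-hand side, i.e. when $M^{[s-p]}(\underline G[s])$ has rank equal to its number of rows $t\binom{n-1+s-p}{n-1}$. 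By Lemma \ref{structureMG} and the remark after (\ref{matrixMM}), the diagonal blocks of $M^{[s-p]}(\underline G[s])$ are (scalar multiples of) submatrices of $\Delta^{p+1}(\underline G[s])$, and by \propref{HFComp} the relevant ranks are controlled by the compressed Hilbert function of $gr_{\m}(A)$. So in each case one first uses compressedness to reduce $G_r$ to $G_r[s]+G_r[s-1]+\dots$ down to the degrees that actually matter, then peels off one homogeneous layer at a time (for $p=1,2,\dots$) by choosing $\ua$ appropriately, exactly as in the Gorenstein case.

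For case (1), $s\le 3$: when $s\le 2$ the dual module is already homogeneous (as noted in the text), and $s=3$ is exactly the content of \cite{DeS12} for compressed level algebras, which I would simply cite. For case (3), $s=4$, $n=2$: here the compressed level algebra of type $e_4=t$ in two variables has Hilbert function $\{1,2,3,2,t\}$ if $t\le 2$ (and the socle sequence forces constraints), and the dual module is generated by $t$ quartics $G_r = G_r[4]+G_r[3]+\dots$. Again $P_{\le 1}\subseteq \langle \underline G[4]\rangle_R$ by maximality of $h_0,h_1$, and the degree-$2$ part of the inverse system is also forced to be everything (since $h_2=3=\binom{n+1}{2}$), so one reduces to $G_r=G_r[4]+G_r[3]$; then one must kill $G_r[3]$ simultaneously using $\varphi_3$. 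This requires $M^{[3]}(\underline G[4])$ to have rank equal to its number of rows, which for $n=2$ and $s=4$ one checks directly via Lemma \ref{structureMG}: the block structure degenerates and the relevant matrix is built from $\Delta^2(\underline G[4])$, whose rank is $h_2(gr_\m(A))=3$ by \propref{HFComp}, and one verifies $3\ge$ (number of rows) holds in the two-variable situation.

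For case (2), $s=4$, $e_4=1$: this is the genuinely new case and the main obstacle. The socle type is $E=(0,\dots,e_{v-1},e_v,\dots,e_3,1,0,\dots)$ with $e_4=1$, so the dual module is generated by one quartic $G=G[4]+G[3]+\dots$ together with $\sum_{i\le 3}e_i$ extra polynomials $H_k$ of degrees $\le 3$ accounting for the lower socle. The extra socle generators of degree $3$ are the subtle part: after reducing $G$ as in \thmref{cangradedm=n} so that only $G[4]+G[3]$ survives, one must choose $\varphi_3$ to kill $G[3]$ while simultaneously not disturbing — or also simplifying — the degree-$3$ socle generators $H_k$ (which are themselves only forms of degree $\le 3$). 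The key computation is that the single-quartic matrix $M^{[3]}(G[4])$ already has maximal rank equal to its number of rows (this is exactly what \corref{rankM} together with \propref{HFComp} gives for $s=4$, since $h_2(gr_\m(A_{G[4]}))=\binom{n+1}{2}$), so the system $[F[3]]_{\Omega^*}=[G[3]]_{\Omega^*}+\ua\,{}^\tau(M^{[3]}(G[4]))$ has a solution with $F[3]=0$ for \emph{every} right-hand side; and such a $\varphi_3$ (being $\equiv \mathrm{Id}$ mod $\m^3$) automatically fixes the degree-$3$ and degree-$4$ parts of all the $H_k$ by (\ref{killing}), so it carries the full dual module of $A$ to that of the graded algebra. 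One then iterates with $\varphi_2$ and $\varphi_1$ to remove the lower-degree tails of $G$ and of the $H_k$, using at each step that the corresponding $M^{[s-p]}$ has maximal row rank because the compressed Hilbert function of $gr_\m(A)$ guarantees $h_{s-p}$ is as large as possible. The obstacle is purely bookkeeping: tracking that killing the degree-$(s-p)$ layer of $G$ does not re-break the higher layers of the $H_k$'s, which follows from the upper-triangular shape of $M(\varphi_{s-p})$ recorded before (\ref{entriesB}), so no real difficulty beyond careful indexing remains.
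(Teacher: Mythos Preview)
Your approach to case~(3) contains a genuine error. For $n=2$, $s=4$, and type $t\ge 2$, the compressed Hilbert function is $\{1,2,3,4,t\}$, not $\{1,2,3,2,t\}$: indeed $h_3=\min\{4,2t\}=4$ once $t\ge 2$. More importantly, the matrix $M^{[3]}(\underline G[4])$ you want to use has size $t\binom{n-1+3}{n-1}\times n\binom{n+1}{n-1}=4t\times 6$, so for $t\ge 2$ it has more rows than columns and \emph{cannot} have full row rank. Hence the system in (\ref{multikilling}) is overdetermined and you cannot in general kill all the $G_r[3]$ simultaneously by a single $\varphi_3$. The paper's argument is entirely different and much simpler: since $h_3=4=\dim_K P_3$, one has $P_{\le 3}\subseteq \langle \underline G[4]\rangle_R$, so the full dual module $\langle \underline G\rangle_R$ already equals $\langle \underline G[4]\rangle_R$; no automorphism is needed at all. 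The point is precisely that the obstruction to your matrix argument is removed not by solving the linear system but by noticing the target space is already absorbed.

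For cases~(1) and~(2) your approach is workable but more laborious than necessary, and in (1) you have omitted a step: the theorem is stated for arbitrary compressed algebras of socle type $E$, not only level ones, so for $s=3$ you must first reduce from socle type $(0,0,e_2,e_3)$ to the $3$-level case before invoking \cite{DeS12}. The paper does this by observing that $P_{\le 1}\subseteq (I^*)^\perp$ forces the $e_2$ degree-$2$ generators of $I^\perp$ to be forms, so only the $e_3$ cubic generators matter. Similarly in case~(2), rather than tracking how $\varphi_3$ acts on the cubic generators $H_k$, the paper uses $P_{\le 2}\subseteq (I^*)^\perp$ (coming from $e_{v-1}=0$ and the compressed Hilbert function) to replace each $H_k$ by a cubic \emph{form} from the outset; then only the single quartic $G$ remains to be straightened, and \thmref{cangradedm=n} applies directly.
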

\begin{proof}
Since a local ring with  Hilbert function   $\{1,n,t\}$ is always graded (the dual module can be generated by quadratic forms), we may assume $s \ge 3.$

If $s=3$ and $A$ is level compressed, then  $A$ is canonically graded by  \cite{DeS12}.  If $A$ is not necessarily level, but compressed, then by (\ref{ev-1})  the socle type is $\{0,0, e_2, e_3\} $ and the Hilbert function is $\{1, n, h_2, e_3\}  $ where $h_2 = \min \{ \dim_K R_2, e_2+ e_3 n\}.$    Because $gr_{\m}(A) = P/I^*$ has embedding dimension $n, $   then  $P_{\le 1} \subseteq (I^*)^\bot. $
Then we may assume that in any system of coordinates $I^\bot$ is generated by  $e_2$ quadratic forms    and $e_3$ polynomials $G_1, \dots, G_{e_3} $ of degree $3.$
Then the result  follows because $R/\ann_R(G_1, \dots, G_{e_3}) $ is a  $3$-level compressed algebra  of type $e_3$ and  hence canonically graded.

Assume $s=4 $ and $e_4=1.$ We recall that if $A$ is Gorenstein, then  the result follows by Theorem \ref{cangradedm=n}. Since $A$ is compressed, then by (\ref{ev-1})  the socle type is $(0,0, 0, e_3,1). $ This means that $I^\bot$ is generated by $e_3$ polynomial of degree $3$ and one polynomial of degree $4.$  Similarly  to the above part,  because $P_{\le 2}  \subseteq  (I^*)^\bot, $   $I^\bot$ can be  generated by $e_3$ forms  of degree $3$  and one polynomial of degree $4.$ As before the problem is reduced to the Gorenstein case with $s=4 $ and the result follows.

Assume $s=4 $ and $n=2$. If $e_4=1, $ then we are in case $(2).$ If $e_4>1, $ because $A$ is compressed, the possible socle types are:  $E_i=(0,0, 0, 0, i)$ with $i=2,\cdots, 5 $ and since $A$  is compressed, the  corresponding Hilbert function is $\{1, 2, 3, 4, i\}.$
In each case  $A$ is graded   because the Hilbert function forces the dual module to be  generated by   forms of degree four.

\end{proof}

\bigskip
The following example shows that \thmref{cangradedm=n} fails if $A$ is Gorenstein of socle degree $s=4, $ but not compressed, i.e.   the Hilbert function is not maximal.

 \begin{example}
 Let $A$ be an Artin   Gorenstein local $K$-algebra   with Hilbert function $HF_A=\{1,2,2,2,1\}$.  The local ring is called almost stretched and a classification can be found in \cite{EV}. In this case   $A $ is isomorphic to one and only one of the following  rings :
\begin{enumerate}
\item[(a)]
$A=R/I$ with $I=(x_1^4, x_2^2) \subseteq  R=K[[x_1, x_2]], $ and $I^{\perp}=\langle y_1^3 y_2\rangle. $ In this case  $A$ is canonically graded,
\item[(b)]
$A=R/I$ with $I=(x_1^4, -x_1^3 + x_2^2)\subseteq  R=K[[x_1, x_2]], $  and $I^{\perp}=\langle y_1^3 y_2+ y_2^3\rangle$.
The associated graded ring is of type $(a)$ and it is  not  isomorphic to $R/I. $ Hence $A$ is not canonically graded.
\item[(c)]
$A=R/I$ with $I=(x_1^2+x_2^2, x_2^4) \subseteq  R=K[[x_1, x_2]], $   and $I^{\perp}=\langle y_1 y_2(y_1^2-y_2^2) \rangle. $ In this case  $A$ is   graded.
\end{enumerate}
 \end{example}

\medskip
The following example  shows   that \thmref{cangradedm=n} cannot be extended to extremal  Gorenstein algebras of socle degree $s=5$.

\begin{example}
\label{s5}
Let us consider the  ideal
$$
I=(x_1^4, x_2^3 - 2 x_1^3 x_2)\subset R=K[[x_1,x_2]].
$$
The quotient $A=R/I$ is an  extremal  Gorenstein  algebra
with  $HF_{A}=\{1,2,3,3,2,1\}$, $I^*=(x_1^4,x_2^3)$ and $I^{\perp}=\langle y_1^3 y_2^2 +   y_2^4\rangle$.
We will prove that $I$ is not isomorphic to $I^*$.
Assume that there exists an analytic  isomorphism $\varphi$ of $R$ mapping $I$
into $I^*$.
It is easy to see that  Jacobian matrix of $\varphi$ is diagonal because $(I^*)^{\perp} = <y_1^3 y_2^2>. $
We perform the computations modulo $(x_1,x_2)^5$, so we only have to consider the following coefficients of $\varphi$
$$
\left\{
  \begin{array}{ll}
\varphi(x_1)= a x_1 + \dots \\ \\
\varphi(x_2)= b x_2 + i x_1^2 + j x_1 x_2 + k x_2^2+ \dots
  \end{array}
\right.
$$
where $a, b $ are units, $i, j, k \in K$.
After  the isomorphism $x_1 \rightarrow  1/a x_1$, $x_2 \rightarrow  1/ bx_2, $  we may assume
$a=b=1$.
Then we have
$$
I^*= \varphi(I )= (x_1^4,  x_2^3 -2  x_1^3 x_2 + 3  i x_1^2 x_2^2 +  3  j x_1 x_2^3 +
 3  k x_2^4) \text{\quad  modulo } (x_1,x_2)^5.
$$
Hence there exist  $\alpha \in K ,\beta \in R$ such that
$$
 x_2^3 -2  x_1^3 x_2 + 3  i x_1^2 x_2^2 +  3  j x_1 x_2^3 +
 3  k x_2^4 = \alpha x_1^4 + \beta  x_2^3  \text{\quad   modulo } (x_1,x_2)^5.
$$
From this equality we deduce $\alpha=0$ and
$$
2  x_1^3 x_2 = x_2^2( x_2  + 3 i x_1^2 + 3  j x_1 x_2 +  3  k x_2^2 - \beta x_2  )  \text{ \quad modulo } (x_1,x_2)^5,
$$
a contradiction, so $I$ is not isomorphic to $I^*$.

It is interesting to notice that we can get the same conclusion  by following the line of the proof of Theorem \ref{cangradedm=n}.   Let $\varphi $ as above sending $I$ into $I^*. $  If we denote by $(z_i)_{i=1,\dots,6}$ the coordinates of an homogeneous form $G[5] $  of degree $5$ in $y_1, y_2$ with respect $\Omega^*, $ then by Lemma \ref{entriesMG},
the matrix $M^{[4]}(G[5])$ ($s=5, p=1$)  has  the following shape
$$
\left(
\begin{array}{cccccc}
 4 z_1 & 4 z_2 & 4 z_3 & 0 & 0 & 0 \\
 3 z_2 & 3 z_3 & 3 z_4 & z_1 & z_2 & z_3 \\
 2 z_3 & 2 z_4 & 2 z_5 & 2 z_2 & 2 z_3 & 2 z_4 \\
 z_4 & z_5 & z_6 & 3 z_3 & 3 z_4 & 3 z_5 \\
 0 & 0 & 0 & 4 z_4 & 4 z_5 & 4 z_6
\end{array}
\right)
$$
 In our  case $G[5]=y_1^3 y_2^2$, so all $z_i$ are zero but $z_3=12, $ hence the above  matrix has rank $4$ and it has not maximal rank accordingly with Corollary \ref{rankM}..
Since  all  the rows are not zero except the last one, it is easy to see that
  $F[4] = y_2^4$ is not in the image of $M^{[4]}(G[5])$,  as (\ref{killing}) requires.
\end{example}

\medskip
The following example  shows   that \thmref{cangradedlevel} cannot be extended to compressed type $2$ level algebras of socle degree $s=4. $

 \begin{example} \label{t=2}
 Let us consider the   forms
$G_1[4]=y_1^2 y_2 y_3$, $G_2[4]=y_1 y_2^2 y_3+ y_2y_3^3$ in $P=K[y_1, y_2, y_3] $ of degree $4$ and define in $R=K[[x_1,x_2, x_3]] $  the ideal  $$I= Ann (G_1[4]+y_3^3, G_2[4]). $$   Then $A= R/I$ is a compressed level algebra with socle degree $4, $   type $2 $ and  Hilbert function
$HF_A=\{1,3,6,6,2\}$. We prove that $A$ is not canonically graded.

We know that  $I^*= Ann (G_1[4], G_2[4])  $ and we  prove that  $A$ and $gr_{\m}(A) $ are not isomorphic as $K$-algebras.   Let $\varphi$ an analytic isomorphism sending $I $ to $I^*, $ then it is easy to see that  $\varphi = I_3$ modulo $(x_1,x_2,x_3)^2$. Following the approach of this paper, we compute the matrix $M^{[3]}(G_1[4] , G_2[4] )$ of size $20\times 18$  and,  accordingly with (\ref{killing}),  we  show  that  $y_3^3$ is not in the image of
$M^{[3]}(G_1[4] , G_2[4] )$.

\vskip 2mm
 Let $F_1[4], F_2[4]$ be two homogeneous  forms of degree $4$ of $R=K[y_1,y_2, y_3]$.
 We denote by $(z_i^j)_{i=1,\dots, 15}$ the coordinates of $F_j[4]$ with respect the basis $\Omega^*$,
 $j=1,2$.
 Then   the $20\times 18$ matrix   $M^{[3]}(F_1[4], F_2[4] )$ has  the following shape, see (\ref{multi}),

$$
{\scriptsize
\left(
\begin{array}{cccccc|cccccc|cccccc}
 3 z^1_1 & 3 z^1_2 & 3 z^1_3 & 3 z^1_4 & 3 z^1_5 & 3 z^1_6 & 0 & 0 & 0 & 0 & 0
   & 0 & 0 & 0 & 0 & 0 & 0 & 0 \\
 2 z^1_2 & 2 z^1_4 & 2 z^1_5 & 2 z^1_7 & 2 z^1_8 & 2 z^1_9 & z^1_1 & z^1_2 & z^1_3 &
   z^1_4 & z^1_5 & z^1_6 & 0 & 0 & 0 & 0 & 0 & 0 \\
 2 z^1_3 & 2 z^1_5 & 2 z^1_6 & 2 z^1_8 & 2 z^1_9 & 2 z^1_{10} & 0 & 0 & 0 & 0
   & 0 & 0 & z^1_1 & z^1_2 & z^1_3 & z^1_4 & z^1_5 & z^1_6 \\
 z^1_4 & z^1_7 & z^1_8 & z^1_{11} & z^1_{12} & z^1_{13} & 2 z^1_2 & 2 z^1_4 & 2
   z^1_5 & 2 z^1_7 & 2 z^1_8 & 2 z^1_9 & 0 & 0 & 0 & 0 & 0 & 0 \\
 z^1_5 & z^1_8 & z^1_9 & z^1_{12} & z^1_{13} & z^1_{14} & z^1_3 & z^1_5 & z^1_6 &
   z^1_8 & z^1_9 & z^1_{10} & z^1_2 & z^1_4 & z^1_5 & z^1_7 & z^1_8 & z^1_9 \\
 z^1_6 & z^1_9 & z^1_{10} & z^1_{13} & z^1_{14} & z^1_{15} & 0 & 0 & 0 & 0 & 0
   & 0 & 2 z^1_3 & 2 z^1_5 & 2 z^1_6 & 2 z^1_8 & 2 z^1_9 & 2 z^1_{10} \\ \hline
 0 & 0 & 0 & 0 & 0 & 0 & 3 z^1_4 & 3 z^1_7 & 3 z^1_8 & 3 z^1_{11} & 3
   z^1_{12} & 3 z^1_{13} & 0 & 0 & 0 & 0 & 0 & 0 \\
 0 & 0 & 0 & 0 & 0 & 0 & 2 z^1_5 & 2 z^1_8 & 2 z^1_9 & 2 z^1_{12} & 2
   z^1_{13} & 2 z^1_{14} & z^1_4 & z^1_7 & z^1_8 & z^1_{11} & z^1_{12} & z^1_{13}
   \\
 0 & 0 & 0 & 0 & 0 & 0 & z^1_6 & z^1_9 & z^1_{10} & z^1_{13} & z^1_{14} &
   z^1_{15} & 2 z^1_5 & 2 z^1_8 & 2 z^1_9 & 2 z^1_{12} & 2 z^1_{13} & 2 z^1_{14}
   \\ \hline
 0 & 0 & 0 & 0 & 0 & 0 & 0 & 0 & 0 & 0 & 0 & 0 & 3 z^1_6 & 3 z^1_9 & 3
   z^1_{10} & 3 z^1_{13} & 3 z^1_{14} & 3 z^1_{15} \\ \hline
 3 z^2_1 & 3 z^2_2 & 3 z^2_3 & 3 z^2_4 & 3 z^2_5 & 3 z^2_6 & 0 & 0 & 0 & 0 & 0
   & 0 & 0 & 0 & 0 & 0 & 0 & 0 \\
 2 z^2_2 & 2 z^2_4 & 2 z^2_5 & 2 z^2_7 & 2 z^2_8 & 2 z^2_9 & z^2_1 & z^2_2 & z^2_3 &
   z^2_4 & z^2_5 & z^2_6 & 0 & 0 & 0 & 0 & 0 & 0 \\
 2 z^2_3 & 2 z^2_5 & 2 z^2_6 & 2 z^2_8 & 2 z^2_9 & 2 z^2_{10} & 0 & 0 & 0 & 0
   & 0 & 0 & z^2_1 & z^2_2 & z^2_3 & z^2_4 & z^2_5 & z^2_6 \\
 z^2_4 & z^2_7 & z^2_8 & z^2_{11} & z^2_{12} & z^2_{13} & 2 z^2_2 & 2 z^2_4 & 2
   z^2_5 & 2 z^2_7 & 2 z^2_8 & 2 z^2_9 & 0 & 0 & 0 & 0 & 0 & 0 \\
 z^2_5 & z^2_8 & z^2_9 & z^2_{12} & z^2_{13} & z^2_{14} & z^2_3 & z^2_5 & z^2_6 &
   z^2_8 & z^2_9 & z^2_{10} & z^2_2 & z^2_4 & z^2_5 & z^2_7 & z^2_8 & z^2_9 \\
 z^2_6 & z^2_9 & z^2_{10} & z^2_{13} & z^2_{14} & z^2_{15} & 0 & 0 & 0 & 0 & 0
   & 0 & 2 z^2_3 & 2 z^2_5 & 2 z^2_6 & 2 z^2_8 & 2 z^2_9 & 2 z^2_{10} \\ \hline
 0 & 0 & 0 & 0 & 0 & 0 & 3 z^2_4 & 3 z^2_7 & 3 z^2_8 & 3 z^2_{11} & 3
   z^2_{12} & 3 z^2_{13} & 0 & 0 & 0 & 0 & 0 & 0 \\
 0 & 0 & 0 & 0 & 0 & 0 & 2 z^2_5 & 2 z^2_8 & 2 z^2_9 & 2 z^2_{12} & 2
   z^2_{13} & 2 z^2_{14} & z^2_4 & z^2_7 & z^2_8 & z^2_{11} & z^2_{12} & z^2_{13}
   \\
 0 & 0 & 0 & 0 & 0 & 0 & z^2_6 & z^2_9 & z^2_{10} & z^2_{13} & z^2_{14} &
   z^2_{15} & 2 z^2_5 & 2 z^2_8 & 2 z^2_9 & 2 z^2_{12} & 2 z^2_{13} & 2 z^2_{14}
   \\ \hline
 0 & 0 & 0 & 0 & 0 & 0 & 0 & 0 & 0 & 0 & 0 & 0 & 3 z^2_6 & 3 z^2_9 & 3
   z^2_{10} & 3 z^2_{13} & 3 z^2_{14} & 3 z^2_{15}
\end{array}
\right)
}
$$
It is enough to specialize the matrix to our case for proving that $y_3^3$ is not in the image of
$M^{[3]}(G_1[4] , G_2[4] )$.
\end{example}

\begin{remark}
Let $\mathcal C_{s,t}$ be the family of level Artin algebras of socle degree $s$ and type $t$.
This family can be parameterized by a non-empty open Zariski subset  $\mathcal I_{s,t}$ of  the affine space of $t$-uples of degree $s$ polynomials of $P$.
The previous examples suggested that if $s,t$ are not in the hypothesis of the last Theorem, then  a generic element $F \in  \mathcal I_{s,t}$ defines a {\bf non-canonically graded} level  Artin algebra
$ R/Ann_R(\langle F\rangle)$ of socle degree $s$ and type $t$.
\end{remark}



\providecommand{\bysame}{\leavevmode\hbox to3em{\hrulefill}\thinspace}
\providecommand{\MR}{\relax\ifhmode\unskip\space\fi MR }
\providecommand{\MRhref}[2]{%
  \href{http://www.ams.org/mathscinet-getitem?mr=#1}{#2}
}
\providecommand{\href}[2]{#2}

\bigskip
\noindent
Juan Elias\\
Departament d'\`Algebra i Geometria\\
Universitat de Barcelona\\
Gran Via 585, 08007 Barcelona, Spain\\
e-mail: {\tt elias@ub.edu}

\bigskip
\noindent
Maria Evelina Rossi\\
Dipartimento di Matematica\\
Universit{\`a} di Genova\\
Via Dodecaneso 35, 16146 Genova, Italy\\
e-mail: {\tt rossim@dima.unige.it}

\end{document}